\def\nd{\noindent}
\newtheorem{theorem}{Theorem}[section]
\newtheorem{definition}{Definition}[section]
\newtheorem{lemma}{Lemma}[section]
\newtheorem{proposition}{Proposition}[section]
\newtheorem{remark}{Remark}[section]
\newcommand{\fim}{\hfill\rule{2mm}{2mm}}
\date{}
\begin{document}
\title{
\vspace{0.5in} {\bf How to break the uniqueness of $W^{1,p}_{loc}(\Omega)$-solutions for very singular elliptic problems by non-local terms }}

\author{
{\bf\large Carlos Alberto Santos}\footnote{Carlos Alberto Santos acknowledges
the support of CAPES/Brazil Proc.  $N^o$ $2788/2015-02$,}\,\, ~~~~~~ {\bf\large Lais Moreira Santos}%\footnote{Jiazheng Zhou was supported by CNPq/Brazil Proc. $N^o$ $232373/2014-0$, }\ \ \ 
\hspace{2mm}\\
{\it\small Universidade de Bras\'ilia, Departamento de Matem\'atica}\\
{\it\small   70910-900, Bras\'ilia - DF - Brazil}\\
{\it\small e-mails: csantos@unb.br,
lais.santos@ufob.edu.br }\vspace{1mm}\\
}

\date{}
\maketitle \vspace{-0.2cm}

\begin{abstract}
In this paper, we are going to show existence of branches of  bifurcation for positive $W^{1,p}_{loc}(\Omega)$-solutions for the very  singular non-local $\lambda$-problem 
$$
   -{\Big(\int_\Omega g(x,u)dx\Big)^r}\Delta_pu={\lambda } \Big(a(x)u^{-\delta} + b(x)u^{\beta}\Big)   \ \ \mbox{in} \ \ \Omega, \ \ \ \ u > 0 \ \ \ \mbox{in} \ \Omega \ \ \ \mbox{and} \ \ u=0 \ \ \mbox{on} \ \partial \Omega, $$
where $\Omega \subset \mathbb{R}^N $  is a smooth bounded domain, $\delta >0$, $0 <  \beta < p-1$, $a $ and $b$  are non-negative measurable  functions and $g$ is a positive continuous function. 

Our approach is based on sub-supersolutions techniques, fixed point theory, in the study of $ W^{1,p}_{loc}(\Omega)$-topology of a solution application and  a new comparison principle  for sub-supersolutions in  $W^{1,p}_{loc}(\Omega)$ to a problem with $p$-Laplacian operator perturbed by a very singular term at zero and sublinear at infinity. 
\end{abstract}

\nd {\it \footnotesize 2010 Mathematics Subject Classifications: 35J25,  35J62, 35J75, 35J92} {\scriptsize  }\\
\nd {\it \footnotesize Key words}: {\scriptsize Very singular term, Uniqueness, Non-local, Comparison Principle}

\section{Introduction}
\def\theequation{1.\arabic{equation}}\makeatother
\setcounter{equation}{0}

In this paper, we will deal with the existence, non-existence and multiplicity of positive $W^{1,p}_{loc}(\Omega)$-solutions for the singular non-local quasilinear $\lambda$-problem
$$
{(P_\lambda)}~~~~~~\left\{
\begin{array}{l}
-{\Big(\displaystyle\int_\Omega g(x, u)dx\Big)^r\Delta_pu = {\lambda } }\Big(a(x)u^{-\delta} + b(x)u^{\beta}\Big)  ~ \mbox{in } \Omega,\\
    u>0 ~ \mbox{in }\Omega,~~
    u=0  ~ \mbox{on }\partial\Omega, 
\end{array}
\right.
$$
where $ \Omega \subset \mathbb{R}^N ( N \geq 2)$ is a smooth bounded domain, $ \ -\Delta_pu = -\mbox{div}(|\nabla u|^{p-2}\nabla u)$ is the $p$-Laplacian operator, $1<p<N$, $\delta > 0$, $0 < \beta < p-1$,   $\lambda > 0$ is a real parameter and $a, b,g\geq 0$  are appropriate functions.

An overview about $(P_\lambda)$. This problem  is non-local  due to the presence of the 
term $(\int_\Omega g(x, u))^r$, which implies that equation in $(P_{\lambda})$ is no longer pointwise equality. In general, the presence of such terms give rise  some  additional difficulties in approaching this kind of problems by classical arguments. For example, many non-local problems are non-variational, in the sense that techniques of variational methods can not be applied in a direct way. The non-local problems have been extensively studied in recent years and their applications arise in various contexts, for example, in the studies of systems of particles in thermodynamical equilibrium
via gravitational potential (\cite{ALY},  \cite{KR2}), 2-D fully turbulent behavior of real flow \cite{CAG}, thermal runaway in Ohmic heating (\cite{LAC}, \cite{CAR}), physics of plasmas, population behavior \cite{COR}, thermo-electric flow in a conductor \cite{LAC3}, gravitational equilibrium of polytropic stars  \cite{KR}, modeling of cell aggregation through interaction with a chemical  \cite{WO} and others.

Many authors have studied non-local problems,
   but up to this date there are no results in the literature in the direction of the $p$-Laplacian operator with $p \neq 2$ in the context of $W^{1,p}_{loc}(\Omega)$-solutions to singular ones. About related problems with weak singularities ($0<\delta <1$) for Laplacian operator, we quote the works  \cite{CLA1, MA, REN, WANG} that showed existence of positive solutions. We note that just in \cite{CLA1} was considered weak solutions still in $H_0^1{(\Omega)}$, while the others ones treated the problem in the context of classical solutions.
   
Recently, Souza at all \cite{souza} considered 
$$
\left\{
\begin{array}{l}
-{\displaystyle g\Big(x, \int_{\Omega} u^p\Big)\Delta u = {\lambda } }u^{\beta}  ~ \mbox{in } \Omega,\\
    u>0 ~ \mbox{in }\Omega,~~
    u=0  ~ \mbox{on }\partial\Omega, 
\end{array}
\right.
$$
where $0<\beta \leq 1$ and $g$ satisfies suitable assumptions. They showed how the structure of the branches of bifurcation of the problem  is affected by the non-local term both with $g$ depending on  $x\in \Omega$ and in the autonomous case as well.

Despite  Garc\'ia-Meli\'an and Lis \cite{MEL} have not  studied neither a singular problem nor a Dirichlet boundary condition problem, we are going to highlight their techniques. They showed existence of solution to the blow-up problem
   \begin{eqnarray}\label{arcoya}
   {\Big(1 + \frac{1}{|\Omega|}\int_{\Omega} g(u)dx\Big) }\Delta u= \lambda {f(u)}  \ \ \mbox{in}  \   \mbox{in} \  \Omega,  \ u > 0 \ \ \mbox{in} \  \Omega, \  \  u= \infty\ \ \mbox{on}  \ \ \partial\Omega,
   \end{eqnarray} 
where $f:[0,\infty) \to (0,\infty)$ is an appropriate continuous function, by  decoupling (\ref{arcoya}) in the  system 
\begin{equation}\label{arcoya2}
\left\{\begin{array}{l}
\Delta u=  \alpha f(u) \ \ \mbox{in}  \ \ \Omega, \  \  u= \infty \ \ \mbox{on}  \ \ \partial\Omega \\
 \alpha = \lambda \Big(1 + \frac{1}{|\Omega|}\int_{\Omega} g(u) dx\Big)^{-1}
\end{array}\right.
\end{equation}
and studying the behavior of the pair $(\alpha,u)$ solution of (\ref{arcoya2}).

To obtain  branches of bifurcation in $(0, \infty)\times \Vert\cdot \Vert_{\infty}$, we have inspired on the Garc\'ia-Meli\'an and Lis' strategy by exploring the $(0,\infty)\times W^{1,p}_{loc}(\Omega)$-topology of the pair $(\alpha, u)$ by using a new Comparison Principle for $W_{loc}^{1,p}(\Omega)$-sub and supersolutions that we proved as well. Taking advantage of this approach, we present a complete picture of the  bifurcation diagram of Problem $(P_\lambda)$. In particular,  we show how the presence of the non-local term changes the structure of the bifurcation of the local problem that emanates from $(0,0)$ and bifurcates from  infinity at infinity.
 
 Before introducing the main results of this work, we need to clarify what we mean by Dirichlet boundary condition and solution to  $(P_{\lambda})$.
 
After the  remarkable paper of Mckenna \cite{LAZ}, in 1991,  we know that a solution of the problem $(P_\lambda)$ with $a=1$,  $b=0$ and $p=2$ still lies in $H_0^1(\Omega)$ if, and only if, $0<\delta< 3$. So, for stronger  singularities, we need of a more general concept of zero-boundary conditions. 

\begin{definition}\label{fronteira}
We say that $u\leq 0$ on $\partial \Omega$ if $ (u - \epsilon)^+  \in W_0^{1,p}(\Omega)$ for every  $ \epsilon > 0 $ given. Furthermore, $u\geq 0$ if $-u\leq 0$ and $u = 0$ on $\partial \Omega$ if $u$ is non-negative and non-positive  on $\partial \Omega$. 
\end{definition}

About solutions.
\begin{definition}\label{sf}
We say that $u $ is a  $ W^{1,p}_{loc}(\Omega)$-solution for $({P_{\lambda}})$ if $u>0$ in $\Omega$ $($for each $K \subset \subset \Omega$ given there exists a positive constant $c_{K}$ such that $u \geq c_{K} > 0$ in $K$) and
\begin{eqnarray}\label{sol}  
\Big(\displaystyle\int_{\Omega} g(x, u)dx\Big)^{r}\displaystyle\int_{\Omega} |\nabla  u|^{p-2}\nabla u\nabla \varphi dx = {\lambda} \displaystyle\int_{\Omega} \Big(a(x)u^{-\delta} + b(x)u^{\beta}\Big)\varphi dx~\mbox{for all }   \varphi  \in  C_{c}^{\infty}(\Omega). 
\end{eqnarray}
\end{definition}

Our approach is based on issues about existence, uniqueness and $(\alpha, u_\alpha)$-behavior in the $(0,\infty)\times W^{1,p}_{loc}(\Omega)$-topology  for the local problem
$$ (L_{\alpha})~~~~
\left\{
\begin{array}{l}
  -\Delta_p u=\alpha \Big( a(x) u^{-\delta} + b(x)u^{\beta} \Big)   ~\mbox{in } \Omega,\\
    u>0    ~\mbox{in } \partial\Omega,~~
    u>0    ~\mbox{on }  \Omega,
 \end{array}
\right. 
$$
where $(L_{\alpha})$ is the problem $(P_\lambda)$ with $\lambda=\alpha$ and $r=0$.

In this sense, we refine the proofs of existence of $W_{loc}^{1,p}(\Omega)$-solutions found \cite{ORSINA}, \cite{CAN} and \cite{ELVES} to include both more general potentials  $a$ and $b$ and a bigger variation of $p$.  The more delicate issue is the uniqueness of solutions in $W_{loc}^{1,p}(\Omega)$ for the problem $(L_{\alpha})$.  The main results in \cite{CAN1} and \cite{CAN} treated about this issue. In \cite{CAN1}, by exploring the linearity of the Laplacian operator, they showed uniqueness of solutions  to  $(P_\lambda)$ with $p=2$, $b=0$ and $a\in L^1(\Omega)$, while in \cite{CAN} the problem $(P_\lambda)$ with $b=0$ was treated with some restrictions either on the potential $a$ or on the geometry of the domain.

Despite the next result being so classical, it is new even for Laplacian operator both by generality of the potentials $a$ and $b$ and principally by the uniqueness of solution in the context of $W_{loc}^{1,p}(\Omega)$ for very singular nonlinearities perturbed by $(p-1)$-sublinear ones. 

\begin{theorem}\label{T6}
Assume  $b \in L^{(\frac{p^*}{\beta+1})'}(\Omega)$. If one of the below assumptions holds 
\begin{itemize}
\item [$(h)_1$:] $0 < \delta < 1 $ and $a \in L^{(\frac{p^*}{1-\delta})'}(\Omega)$;
\item [$(h)_2$:]$\delta \geq 1$ and $a \in L^1(\Omega),$  
\end{itemize}
then there exists a $u=u_{\alpha} \in W_{loc}^{1,p}(\Omega)$
solution to the problem $(L_{\alpha})$ for each $\alpha>0 $ given. Moreover, if $\delta \leq 1$, then $u \in W_0^{1,p}(\Omega)$. Besides this, the solution is {\bf only} if $ a + b > 0$.
\end{theorem}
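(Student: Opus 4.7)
The plan is a standard approximation/sub-supersolution scheme for existence, combined with the new comparison principle for uniqueness. First I would regularize the singularity by replacing $u^{-\delta}$ with $(u+1/n)^{-\delta}$, obtaining the auxiliary problem
\[
-\Delta_p u_n = \alpha \bigl(a(x)(u_n + 1/n)^{-\delta} + b(x) u_n^\beta\bigr)\quad\text{in }\Omega, \qquad u_n\in W^{1,p}_0(\Omega).
\]
Because $\beta < p-1$ and the singular term is now bounded, the corresponding energy functional is coercive and weakly lower semicontinuous on $W^{1,p}_0(\Omega)$, so a positive minimizer $u_n\in L^\infty(\Omega)$ exists (and is unique by a Diaz--Saa convexity argument applied to $(t\mapsto t^{-\delta}+t^\beta)/t^{p-1}$).

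Next I would establish the uniform bounds needed to pass to the limit in $W^{1,p}_{loc}(\Omega)$. A uniform positive lower bound $u_n \ge c_K > 0$ on each compact $K\Subset \Omega$ is obtained by comparison with a fixed subsolution of the purely singular problem (a suitable power of the first eigenfunction of $-\Delta_p$, as in the constructions of \cite{ORSINA,CAN,ELVES}); the case $a\equiv 0$ being nonsingular is treated separately. Interior elliptic estimates then bound $\|u_n\|_{W^{1,p}(K)}$ uniformly on every $K\Subset\Omega$, and Boccardo--Murat a.e.\ convergence of gradients yields a subsequential limit $u\in W^{1,p}_{loc}(\Omega)$ satisfying Definition \ref{sf}.

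For the $W^{1,p}_0$ regularity when $\delta\le 1$, testing the approximate equation with $u_n$ itself is legitimate and gives
\[
\int_\Omega |\nabla u_n|^p\,dx \;\le\; \alpha\int_\Omega a(x) u_n^{1-\delta}\,dx + \alpha\int_\Omega b(x) u_n^{\beta+1}\,dx.
\]
The two integrability hypotheses $a\in L^{(p^*/(1-\delta))'}(\Omega)$ (in $(h)_1$) and $b\in L^{(p^*/(\beta+1))'}(\Omega)$, together with the Sobolev embedding $W^{1,p}_0\hookrightarrow L^{p^*}$ and Young's inequality, absorb both right-hand terms into $\|u_n\|_{W^{1,p}_0}^p$ because $1-\delta<p$ and $\beta+1<p$. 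This produces a uniform $W^{1,p}_0$-bound, so the limit $u$ lies in $W^{1,p}_0(\Omega)$ and the Dirichlet boundary condition is attained in the usual trace sense (consistent with Definition \ref{fronteira}).

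For uniqueness under $a+b>0$, suppose $u_1,u_2$ are two $W^{1,p}_{loc}$-solutions of $(L_\alpha)$. Each is simultaneously a sub- and a supersolution, so the new comparison principle announced in the Introduction will yield $u_1\le u_2$ and $u_2\le u_1$. I expect this comparison step to be the main obstacle of the whole proof: when $\delta\ge 1$ and $u_i\notin W^{1,p}_0$, the natural test function $(u_1-u_2)^+$ is not admissible, and one must work with truncations such as $\min\bigl((u_1-u_2)^+,M\bigr)\,\eta_\varepsilon$ where $\eta_\varepsilon$ is a cutoff supported on $\{\mathrm{dist}(x,\partial\Omega)>\varepsilon\}$, combined with a Picone/Diaz--Saa identity that exploits the strict monotonicity of $s\mapsto \bigl(a(x)s^{-\delta}+b(x)s^\beta\bigr)/s^{p-1}$ on the set $\{a+b>0\}$. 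Passing to the limit in both $M$ and $\varepsilon$ is the delicate point, and the assumption $a+b>0$ is precisely what guarantees the strict monotonicity needed to conclude $u_1=u_2$.
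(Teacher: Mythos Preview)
Your existence sketch follows the same regularization strategy as the paper's Appendix, with two technical differences worth flagging. First, the paper also truncates the coefficients, setting $a_n=\min\{a,n\}$ and $b_n=\min\{b,n\}$; this makes the approximate right-hand side bounded, so Lieberman's $C^{1,\alpha}$ regularity applies and your claim $u_n\in L^\infty(\Omega)$ becomes justified (with $a$ merely in $L^1$ and not truncated, neither existence for the approximate problem nor an $L^\infty$ bound is immediate). Second, rather than invoking generic ``interior elliptic estimates'' --- which, with $a\in L^1$ only, would need an a~priori upper bound you have not established --- the paper tests with $(u_n-\delta_k)^+\in W^{1,p}_0(\Omega)$, where $\delta_k=\inf_{\Omega_k}\tilde u_1$ for a fixed positive barrier $\tilde u_1$ and an exhaustion $\Omega_k\uparrow\Omega$. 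This global test function gives $\int_{\{u_n>\delta_k\}}|\nabla u_n|^p\le C_k$ directly, and as a by-product delivers $(u-\epsilon)^+\in W^{1,p}_0(\Omega)$ for every $\epsilon>0$, i.e.\ the boundary condition of Definition~\ref{fronteira} when $\delta>1$, which your sketch does not address.

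For uniqueness your diagnosis is correct and the tools you list (Picone/D\'iaz--Sa\'a, truncations, strict monotonicity of $s\mapsto(as^{-\delta}+bs^\beta)/s^{p-1}$ on $\{a+b>0\}$) are exactly the ones used. However, the boundary-cutoff route you propose hits a real obstacle: on the support of $\nabla\eta_\varepsilon$ there is no control of $|\nabla u_i|$ when the $u_i$ are merely in $W^{1,p}_{loc}(\Omega)$, so the terms carrying $\nabla\eta_\varepsilon$ cannot be shown to vanish as $\varepsilon\to 0$. The paper circumvents this with a device you did not anticipate. It introduces an \emph{intermediate} function $\omega_0\in W^{1,p}_0(\Omega)$, obtained by minimizing a regularized energy $J_\epsilon$ over the convex set $\{0\le\omega\le\bar v\}$, and shows $\omega_0$ is a supersolution of the $\epsilon$-shifted equation. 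One then compares the subsolution $\underline v$ not to $\bar v$ directly but to $\omega_0+\epsilon$, arguing by contradiction on a ball $B_R(x_0)\subset\subset\Omega$, so the cutoff $\phi$ lives away from $\partial\Omega$ and its gradient contributions, supported on a thin annulus $B_{R+r}\setminus B_R$, are controlled and sent to zero. Since $\omega_0\le\bar v$, one gets $\underline v\le\omega_0+\epsilon\le\bar v+\epsilon$ and lets $\epsilon\to 0$. Your direct approach does succeed in the $W^{1,p}_0$ sub-case, and the paper indeed handles that separately in the way you describe; but for genuinely $W^{1,p}_{loc}$ solutions the intermediate minimizer is the missing idea.
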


As said above, by using  fine properties of the solution $u_\alpha \in  W_{loc}^{1,p}(\Omega)$ together with a Loc-Schmitt's Theorem \cite{LOC}, we able to prove the next result.

Before stating it, let us consider
\begin{itemize}
\item[($\tilde{h}_0$)] $a, b  \in  L^m(\Omega) $ for some  $ m > N/p$,
\item[($\tilde{h}_1$)]  $a, b  \in  L^m(\Omega) $ for some  $ m > N$
\end{itemize}
and denote by
$$\Sigma = \{(\lambda, u) \in (0,\infty)\times C(\overline{\Omega})~/~u \in W^{1,p}_{loc}(\Omega) \cap C(\overline{\Omega})~\mbox{is a solution of}~ (P_{\lambda})\}.$$

So, we have.
\begin{theorem} 
Assume  $\delta > 0$ and $0<\beta < p-1$ hold. If:
\begin{enumerate} 
\item[$1)$] $g \in C(\overline{\Omega}\times  [0, \infty), (0, \infty))$, for some $f_1 \in C(\overline{\Omega})$
\begin{eqnarray}\label{4}
\displaystyle\lim_{t\rightarrow \infty}g(x,t)t^{\theta_1} = f_1(x) > 0  ~\mbox{uniformly in } ~\overline{\Omega}, 
\end{eqnarray} 
and in addition 
\begin{itemize}
\item[$a)$]  $(\tilde{h}_0)$  and \ $\theta_1r < p - 1 - \beta $ hold, then $(P_\lambda)$ admits at least one solution in $\Sigma$ for each $\lambda > 0$ given. Besides this, we can assume $f_1 \equiv\infty$ if $r \geq 0$ and $ f_1 \equiv 0$ if $r<0$,
\item[$b)$]  $(\tilde{h}_1)$, \ $\theta_1r  > p - 1 - \beta $ and $\theta_1 < 1$ hold, then there exists   $ 0<\lambda^*< \infty$ such that  $(P_{\lambda})$  admits at least two $W_{loc}^{1,p}(\Omega)\cap C(\overline{\Omega})$-solutions for each $ \lambda\in (0,\lambda^*)$ given,  at least one solution for $\lambda = \lambda^*$ and no solution for $\lambda > \lambda^*$. Moreover, we can admit $f_1\equiv 0$ if $r \geq 0$ and $ f_1 \equiv\infty$ if $r<0$,
\end{itemize}
\item[$2)$] $g \in C((0, \infty), (0, \infty))$,  $(\tilde{h}_1)$ is satisfied, for some $f_1$ and $f_2 \in C(\overline{\Omega}) $
\begin{eqnarray}\label{5}
\displaystyle\lim_{t\rightarrow \infty}g(x, t)t^{\theta_1} = f_1(x) > 0 ~ \mbox{and} ~\displaystyle\lim_{t\rightarrow 0^+}g(x, t)t^{\theta_2} = f_2(x) > 0 ~\mbox{uniformly in} ~\overline{\Omega}
\end{eqnarray} 
and additionally 
\begin{itemize}
\item[a)]   $ \theta_1r < p - 1 - \beta$,  \ $\theta_2r > p -1 + \delta $ and $  \theta_2 < 1 $ hold, then
there exists a  $0<\lambda^*<\infty$  such that  \  $(P_\lambda)$ admits at least two $W_{loc}^{1,p}(\Omega)\cap C(\overline{\Omega})$-solutions for $\lambda>\lambda^*$,  at least one for $\lambda=\lambda^*$ and no solutions for  $0<\lambda<\lambda^*$. Besides this, we can have $f_1 \equiv f_2 \equiv \infty$ if $r > 0$ and $f_1 \equiv f_2 \equiv 0$ if $r < 0$,
\item[b)]  $ \theta_1r > p - 1 - \beta$,  \ $\theta_2r > p -1 + \delta $ and $ \theta_1,  \theta_2 < 1 $ hold,  then $(P_{\lambda})$ admits at least one $W_{loc}^{1,p}(\Omega)\cap C(\overline{\Omega})$-solution for each $\lambda > 0$ given. In this case, we can have $f_1 \equiv 0$ and $f_2 \equiv \infty$ if $r > 0$ and $f_1\equiv \infty$ and $f_2 \equiv 0$ if $r < 0$. 
\end{itemize}
\end{enumerate}
Besides these, in all cases $\Sigma$ is the {\it continuum} of solutions given by a curve that:
\begin{enumerate}
\item[$(i)$] emanates from $0$ at $\lambda=0$ and bifurcates from infinity  at $\lambda=\infty$ in the case $1-a)$,
\item[$(ii)$]  emanates from $0$ at $\lambda=0$ and bifurcates from infinity  at $\lambda=0$ in the case $1-b)$,
\item[$(iii)$]  emanates from $0$ at $\lambda=\infty$ and bifurcates from infinity  at $\lambda=\infty$ in the case $2-a)$,
\item[$(iv)$]  emanates from $0$ at $\lambda=\infty$ and bifurcates from infinity  at $\lambda=0$ in the case $2-b)$,
\end{enumerate}
\end{theorem}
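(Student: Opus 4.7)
Following the decoupling strategy of Garc\'ia-Meli\'an and Lis, I reduce $(P_\lambda)$ to the local family $(L_\alpha)$ coupled with the scalar compatibility equation
\begin{equation*}
\lambda \;=\; H(\alpha) \;:=\; \alpha\Bigl(\int_\Omega g(x,u_\alpha)\,dx\Bigr)^{r},
\end{equation*}
where $u_\alpha$ is the unique $W^{1,p}_{loc}(\Omega)$-solution of $(L_\alpha)$ supplied by Theorem~\ref{T6}. Under $(\tilde h_0)$ or $(\tilde h_1)$, Moser/Lieberman regularity places $u_\alpha\in C(\overline\Omega)$, so $\alpha\mapsto(H(\alpha),u_\alpha)$ sends $(0,\infty)$ into $(0,\infty)\times C(\overline\Omega)$; its image lies in $\Sigma$, and the uniqueness clause of Theorem~\ref{T6} forces every element of $\Sigma$ to be of this form.

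The heart of the argument is a careful study of the family $\{u_\alpha\}_{\alpha>0}$. From the new $W^{1,p}_{loc}$ comparison principle proved earlier I obtain monotonicity $\alpha_1\le\alpha_2\Rightarrow u_{\alpha_1}\le u_{\alpha_2}$. Sub-supersolution pairs built from the sublinear term $b u^{\beta}$ and from the singular term $a u^{-\delta}$ respectively furnish the two-sided asymptotics $\|u_\alpha\|_\infty\sim \alpha^{1/(p-1-\beta)}$ as $\alpha\to\infty$ and $\|u_\alpha\|_\infty\sim \alpha^{1/(p-1+\delta)}$ as $\alpha\to 0^+$, together with matching uniform-on-compacts lower bounds. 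Combining these a priori estimates with uniqueness and an Arzel\`a-Ascoli plus $W^{1,p}_{loc}$ compactness argument yields continuity of $\alpha\mapsto u_\alpha$ from $(0,\infty)$ into $W^{1,p}_{loc}(\Omega)\cap C(\overline\Omega)$, whence continuity of $H$ on $(0,\infty)$.

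Next I compute the endpoint limits of $H$ from (\ref{4})-(\ref{5}) and the above rates. Under hypothesis~1, $g$ is continuous on $\overline\Omega\times[0,\infty)$, so $\int_\Omega g(x,u_\alpha)\,dx\to\int_\Omega g(x,0)\,dx\in(0,\infty)$ as $\alpha\to 0^+$, giving $H(0^+)=0$; at infinity, (\ref{4}) and the scaling yield $\int_\Omega g(x,u_\alpha)\,dx\sim c\,\alpha^{-\theta_1/(p-1-\beta)}$, so $H(\alpha)\sim \alpha^{1-\theta_1r/(p-1-\beta)}$, which explodes in case~1-a and vanishes in case~1-b. Under hypothesis~2 an analogous computation at $0$ using the second limit in (\ref{5}) and $\theta_2 r>p-1+\delta$ gives $H(0^+)=+\infty$, while the sign of $p-1-\beta-\theta_1 r$ again determines $H(\infty)$. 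The intermediate value theorem then delivers the four existence/multiplicity statements: in cases~1-a and 2-b, $H$ sweeps $(0,\infty)$; in cases~1-b and 2-a, the equal limits at the two endpoints force a strict extremum $\lambda^*\in(0,\infty)$ and the stated two/one/zero pattern.

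Finally, continuity of $\alpha\mapsto(H(\alpha),u_\alpha)$ makes $\Sigma$ the continuous image of the interval $(0,\infty)$, and the global continuation theorem of Loc-Schmitt \cite{LOC} upgrades this into the {\it continuum} asserted; items $(i)$-$(iv)$ follow at once by pairing the four endpoint patterns of $H$ with the limits of $\|u_\alpha\|_\infty$ at $\alpha=0,\infty$. The main obstacle is the continuity of $\alpha\mapsto u_\alpha$ and of $\alpha\mapsto\int_\Omega g(x,u_\alpha)\,dx$: on compacts one needs uniform positivity of $u_{\alpha_n}$ to control the very singular term $u^{-\delta}$ in the regime $\delta\ge 1$, where $u_\alpha\notin W^{1,p}_0(\Omega)$; and near $\partial\Omega$, passing to the limit in $\int_\Omega g(x,u_{\alpha_n})\,dx$ as $\alpha_n\to 0^+$ under hypothesis~2 — where $g$ blows up at $t=0$ — requires fine integrability of $u^{-\theta_2}$ up to the boundary. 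It is precisely at this step that the $W^{1,p}_{loc}$ comparison principle and the refined sub-supersolution machinery built earlier in the paper become indispensable.
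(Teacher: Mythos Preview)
Your plan is correct and tracks the paper's proof closely: decouple $(P_\lambda)$ into $(L_\alpha)$ plus the scalar equation $\lambda=H(\alpha)=\alpha\bigl(\int_\Omega g(x,u_\alpha)\,dx\bigr)^r$, use the comparison principle for monotonicity of $\alpha\mapsto u_\alpha$, build explicit sub/supersolutions to pin down the rates of $u_\alpha$ at $\alpha\to 0^+$ and $\alpha\to\infty$, deduce continuity of $T$ and then of $H$, and read off the four bifurcation pictures from the endpoint limits of $H$ via the intermediate value theorem. Two small remarks: (i) the paper does not actually prove exact asymptotics $\|u_\alpha\|_\infty\sim\alpha^{1/(p-1-\beta)}$ but only two-sided bounds with exponents $q<\frac{1}{p-1-\beta}<l$ (resp.\ $l<\frac{1}{p-1+\delta}<q$ at $0$), which is all one needs and all one gets from the sub/supersolution pairs; (ii) the Loc--Schmitt citation is used in the paper only for the sub--supersolution existence step (their Theorem~3.1), not for the continuum structure of $\Sigma$ --- the latter comes for free once you know $\Sigma$ equals the continuous image of $(0,\infty)$ under $\alpha\mapsto(H(\alpha),u_\alpha)$, as you yourself observe.
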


Below, we draw the $(0,\infty)\times \Vert \cdot \Vert_{\infty}$-diagram of $W_{loc}^{1,p}(\Omega)$-solutions given by the above Theorem.
    \vspace{0.5cm}
     \newpage
     \begin{figure}
     \centering
	\begin{tikzpicture}[scale=.55]
	%figura
	\draw[thick, ->] (-1, 0) -- (8, 0);%asse x, la punta puo' essere > o latex ( o altra roba ma..)
	\draw[thick, ->] (0, -1) -- (0, 7);%asse y
    \draw[thick] (0, 0) .. controls (1, 3) and (4, 0) ..(7,6.5);
	\draw (8,0) node[below]{$\lambda$};
	\draw (0, 0) node[below left]{$0$};
	\draw (0,7) node[left]{$\|u\|_\infty$};
   \draw (3,-1) node[below]{\textrm{Fig. 1 Theorem 1.2 item 1-$a)$}};
    \end{tikzpicture}
    \hspace{1.0cm}
  	\begin{tikzpicture}[scale=0.55]
   \draw[thick, ->] (-1, 0) -- (8, 0);%asse x, la punta puo' essere > o latex ( o altra roba ma..)
	\draw[thick, ->] (0, -1) -- (0, 7);%asse y
	\draw[thick] (0.0 ,0.0) .. controls (7.0,3.0) and (0.2,3) .. (0.15, 6.8);
	\draw[thick, dotted] (3.17,0) -- (3.17, 7.0);
	\draw (8,0) node[below]{$\lambda$};
	\draw (3.17,0) node[below]{$\lambda^*$};
	\draw (0, 0) node[below left]{$0$};
	\draw (0,7) node[left]{$\|u\|_\infty$};
   \draw (3,-1) node[below]{\textrm{Fig. 2 Theorem 1.2 item 1-$b)$}};
    \end{tikzpicture}
       \hspace{1.0cm}
       \begin{tikzpicture}[scale=0.55]
	\draw[thick, ->] (-1, 0) -- (8, 0);
	\draw[thick, ->] (0, -1) -- (0, 7);
    \draw[thick] (7, 7) .. controls (0.1, 5) and (2, 0.2) ..(7,0.1);
	\draw[thick, dotted] (2.5,0) -- (2.5,7);
	\draw (8,0) node[below]{$\lambda$};
	\draw (2.5,0) node[below]{$\lambda^*$};
	\draw (0, 0) node[below left]{$0$};
	\draw (0,7) node[left]{$\|u\|_\infty$};
\draw (3,-1) node[below]{\textrm{Fig. 3 Theorem 1.2 item 2-$a)$}};
	\end{tikzpicture}
	 \hspace{1.0cm}
			\begin{tikzpicture}[scale=0.55]
\draw[thick, ->] (-1, 0) -- (8, 0);%asse x, la punta puo' essere > o latex ( o altra roba ma..)
	\draw[thick, ->] (0, -1) -- (0, 7);%asse y
	\draw[thick] (0.2,7) .. controls (1,1) and (5, 0.2
	) .. (7,0.2) ;
		\draw (8,0) node[below]{$\lambda$};	
	\draw (0, 0) node[below left]{$0$};
	\draw (0,7) node[left]{$\|u\|_\infty$};
\draw (3,-1) node[below]{\textrm{Fig. 3 Theorem 1.2 item 2-$b)$}};
	\end{tikzpicture}	
\end{figure}	
          
%\begin{center} \begin{figure}[h!]
%            \begin{center}
%            \includegraphics[width=15cm]{Bifurcacao.png}\\
%            %\caption{}\label{2}
%            \end{center}
%            \end{figure} \end{center}

Below, we list some of the main contributions of this work to literature:
\begin{enumerate}
\item our result of uniqueness for the local problem $(L_{\alpha})$ improves the main theorems in \cite{CAN1} and \cite{CAN} by:
\begin{enumerate}
\item[$(i)$] removing any requirement about the geometry of the domain,
\item[$(ii)$] permitting a perturbation of the very singular term by a sublinear one,
\item[$(ii)$] including more general potentials $a$ and $b$,
\end{enumerate}
\item singular problems of the type $(P_{\lambda})$ involving the $p$-Laplacian operator with $  \delta $  assuming any positive value and weights $ a $ and $ b $ being unbounded have not yet been considered in the literature up to now,
\item Theorem 1.2 complements the principal results in \cite{souza} by consider a perturbation of their nonlinearity by a strong singular one,
\item the problem $(P_\lambda)$ with the non-local term $a(t) = t^r $, $t>0$ with $r\in \mathbb{R}$ for a singular pertubation by a $(p-1)$-sublinear ones has not yet been considered in literature so far. Our non-local term is not requireded being either bounded from below by positive constant or from above, and in fact it may be singular at $t=0$. See for instance \cite{MA}, \cite{REN}, \cite{souza} and references therein.
\end{enumerate}

This work has the following structure. In Section 2, we prove the uniqueness of $W_{loc}^{1,p}(\Omega)$-solutions to the problem $(L_{\alpha})$ inspired on ideas of \cite{Saa} and \cite{CAN}. To do this a  comparison principle for $W_{loc}^{1,p}(\Omega)$-sub and supersolutions is established. As the proof of existence of solution of  Theorem \ref{T6} follows by a refinement of   well-know arguments, we will  just sketch it in the Appendix. In Section 3, by exploring  the uniqueness of  $W_{loc}^{1,p}(\Omega)$-solutions to Problem $(L_{\alpha})$,   appropriate test functions together with a result of Boccardo and Murat \cite{MUR}, we are able to prove that the operator $ T:(0,\infty) \to W_{loc}^{1,p}(\Omega)$ (see (\ref{1141}) below) is well-defined and continuous. In Section 4, we conclude the proof of Theorem 1.2. 

\vspace{0.3cm} 
Throughout this paper, we make use of the following notations: 
\begin{itemize}
\item The norm in $L^p(\Omega)$ is denoted by $\|.\|_p$. 
\item The space $W_0^{1,p}(\Omega)$ endowed with the norm $\|\nabla u\|_p^p =  \displaystyle\int_{\Omega} |\nabla u|^pdx. $
\item $\vert U \vert$ stands for the Lebesgue measure of mensurable set $U \subset \mathbb{R}^N$. 
\item $C_c^{\infty}(\Omega) = \{ u : \Omega \rightarrow \mathbb{R}\} | u \in C^{\infty}(\Omega) \ \mbox{e} \ supp ~ u \subset \subset \Omega \}$.
\item $L_c^{\infty}(\Omega) = \{ u: \Omega \rightarrow \mathbb{R}\} | u \in L^{\infty}(\Omega) \ \mbox{e} \ supp ~ u \subset \subset \Omega \}.$
\item $C, C_1, C_2, \cdots $  denote positive constants. 

\end{itemize}

\section{Comparison Principle for Sub and Supersolutions in $W_{loc}^{1,p}(\Omega)$}

Below, let us define  subsolution and  supersolution to the problem $(L_1)$, that is, to the problem 
\begin{equation}\label{lsp} \left\{
\begin{array}{l}
  -\Delta_p u=  a(x) u^{-\delta} + b(x)u^{\beta}  ~\mbox{in } \Omega,\\
    u>0    ~\mbox{in } \partial\Omega,~~
    u>0    ~\mbox{on }  \Omega.
 \end{array}
\right.
\end{equation}

\begin{definition}\label{D3} A function $\underline{v} \in W_{loc}^{1,p}(\Omega) $ is a subsolution of $(\ref{lsp})$ if:
\begin{itemize}
\item[$i)$]  there is a positive constant $c_K$ such that $\underline{v} \geq c_K$ in $K$ for each $K \subset \subset \Omega$ given; 
\item[$ii)$] the inequality
%-----------------------------------------------------------
\begin{eqnarray}\label{3.9}
 \displaystyle\int_{\Omega} |\nabla \underline{v}|^{p-2}\nabla \underline{v} \nabla \varphi dx \leq   \displaystyle\int_{\Omega} \Big(\frac{a(x) }{\underline{v}^{\delta}} + b(x)\underline{v}^{\beta}\Big) \varphi dx
 \end{eqnarray}
 holds for all $ 0 \leq \varphi \in C_{c}^{\infty}(\Omega)$. 
 %----------------------------------------------------------
 When $\overline{v}\in W_{loc}^{1,p}(\Omega) $ satisfies  the reversed inequality in $(\ref{3.9})$, it is called a supersolution to problem  $(\ref{lsp})$. 
\end{itemize}
\end{definition}

\begin{theorem}[$W_{loc}^{1,p}(\Omega)$-Comparison Principle]\label{unicidade1} Suppose that  $b \in L^{(\frac{p^*}{\beta +1})'}(\Omega)$ and $a+b>0$ in $\Omega$. Assume that one of the below assumptions
\begin{itemize}
\item [$(h)_1$:] $0 < \delta < 1 $ and $a \in L^{(\frac{p^*}{1-\delta})'}(\Omega)$;
\item [$(h)'_2$:]$\delta > 1$ and $a \in L^1(\Omega),$ 
\item [$(h)_3$:]$\delta = 1$ and $a \in L^s(\Omega)$ for some $s>1$ 
\end{itemize}
holds. If $\underline{v}, \overline{v}  \in W_{{loc}}^{1,p}(\Omega)$ are subsolution and supersolution of $(\ref{lsp})$, respectively, with  $\underline{v} \leq 0$ in $\partial \Omega$, then $\underline{v} \leq \overline{v}$ a.e. in $\Omega$. Besides this, if  in addition $\underline{v} $,  $\overline{v} \in W_0^{1,p}(\Omega)$ and (\ref{3.9}) is satisfied for all $0 \leq \varphi \in W_0^{1,p}(\Omega)$, then the same conclusion hold even for $a \in L^1(\Omega)$ in $(h)_3$.
\end{theorem}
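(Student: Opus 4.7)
My plan is to adapt a Díaz--Saá/Picone-type test function argument to the $W_{loc}^{1,p}(\Omega)$ setting, in the spirit of \cite{Saa} and \cite{CAN}. Formally, one tests the subsolution and supersolution inequalities against
$$\psi_1=\frac{(\underline{v}^p-\overline{v}^p)^+}{\underline{v}^{p-1}}, \qquad \psi_2=\frac{(\underline{v}^p-\overline{v}^p)^+}{\overline{v}^{p-1}},$$
respectively, and subtracts them. The left-hand side of the resulting inequality is the classical Díaz--Saá expression, whose integral is non-negative thanks to the hidden convexity of $u\mapsto\int_\Omega|\nabla u^{1/p}|^p$ associated with the $p$-Laplacian. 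The right-hand side collapses, after dividing, to
$$\int_\Omega (\underline{v}^p-\overline{v}^p)^+\Bigl[a(x)\bigl(\underline{v}^{-(\delta+p-1)}-\overline{v}^{-(\delta+p-1)}\bigr)+b(x)\bigl(\underline{v}^{-(p-1-\beta)}-\overline{v}^{-(p-1-\beta)}\bigr)\Bigr]dx.$$
Since $\delta+p-1>0$ and $p-1-\beta>0$ (the latter from $\beta<p-1$), the bracket is non-positive on $\{\underline{v}>\overline{v}\}$ and strictly negative wherever $a+b>0$. Pinching the two sign informations forces $|\{\underline{v}>\overline{v}\}\cap\{a+b>0\}|=0$, which together with the standing hypothesis $a+b>0$ on $\Omega$ yields $\underline{v}\leq\overline{v}$ a.e.

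The hard part is the admissibility of $\psi_1,\psi_2$: since $\underline{v},\overline{v}$ only lie in $W_{loc}^{1,p}(\Omega)$ and the subsolution inequality (\ref{3.9}) is tested against $\varphi\in C_c^\infty(\Omega)$, the formal choices above are neither in $W^{1,p}(\Omega)$ nor compactly supported a priori. I would handle this by a double regularization: fix $\varepsilon>0$ and replace $\psi_1,\psi_2$ by
$$\psi_1^\varepsilon=\frac{(\underline{v}^p-(\overline{v}+\varepsilon)^p)^+}{\underline{v}^{p-1}}, \qquad \psi_2^\varepsilon=\frac{(\underline{v}^p-(\overline{v}+\varepsilon)^p)^+}{(\overline{v}+\varepsilon)^{p-1}}.$$
The support of these functions lies in $\{\underline{v}>\overline{v}+\varepsilon\}\subset\{\underline{v}>\varepsilon\}$, and by Definition \ref{fronteira} one has $(\underline{v}-\varepsilon)^+\in W_0^{1,p}(\Omega)$, so $\psi_i^\varepsilon\in W_0^{1,p}(\Omega)\cap L^\infty(\Omega)$ (using the pointwise bound $0\leq\psi_1^\varepsilon\leq\underline{v}$). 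An additional cutoff $\chi_n\in C_c^\infty(\Omega)$ with $\chi_n\uparrow 1$, followed by mollification, then produces sequences of $C_c^\infty(\Omega)$ test functions converging to $\chi_n\psi_i^\varepsilon$ in $W^{1,p}$, which is enough to test (\ref{3.9}) legitimately.

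The remaining subtlety is the integrability of the singular term $a(x)\underline{v}^{-\delta}\psi_1^\varepsilon$, and this is exactly where the three hypotheses come into play. Using $\psi_1^\varepsilon\leq\underline{v}$, the pointwise estimate
$$a(x)\underline{v}^{-\delta}\psi_1^\varepsilon\leq a(x)\,\underline{v}^{1-\delta}$$
on the support $\{\underline{v}>\overline{v}+\varepsilon\}$ reduces matters to controlling $\int_\Omega a(x)\,\underline{v}^{1-\delta}dx$. Under $(h)_1$ ($\delta<1$) this follows from Hölder's inequality and the Sobolev embedding $W^{1,p}\hookrightarrow L^{p^*}$ applied to $(\underline{v}-\varepsilon)^++\varepsilon$, uniformly in $\varepsilon$; under $(h)_3$ ($\delta=1$) the integrand collapses to $a(x)\in L^s\subset L^1$; and under $(h)'_2$ ($\delta>1$) the support-lower-bound $\underline{v}>\varepsilon$ gives $a\,\underline{v}^{1-\delta}\leq \varepsilon^{1-\delta}a\in L^1$. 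After sending $n\to\infty$ (to remove $\chi_n$) and then $\varepsilon\to 0^+$ via monotone and dominated convergence, the formal identity is recovered and the measure-theoretic conclusion closes the argument. For the final assertion of the theorem, when $\underline{v},\overline{v}\in W_0^{1,p}(\Omega)$ and the inequalities already hold against all $0\leq\varphi\in W_0^{1,p}(\Omega)$, no truncation is required; in that setting the sharper pointwise bound $a(x)\psi_1/\underline{v}=a(x)(\underline{v}^p-\overline{v}^p)^+/\underline{v}^p\leq a(x)$ legitimizes $a\in L^1(\Omega)$ directly in the critical case $\delta=1$.
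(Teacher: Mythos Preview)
Your Díaz--Saá/Picone outline captures the right algebraic structure, and in the final case $\underline v,\overline v\in W_0^{1,p}(\Omega)$ it is essentially what the paper does. But in the main $W_{loc}^{1,p}$ setting there is a genuine gap in the admissibility step, and it is precisely the gap that drives the paper's proof.

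The problem is with $\overline v$, not $\underline v$. From $\underline v\le 0$ on $\partial\Omega$ you correctly extract $(\underline v-\varepsilon)^+\in W_0^{1,p}(\Omega)$, hence $|\nabla\underline v|\in L^p(\{\underline v>\varepsilon\})$. But no boundary condition whatsoever is imposed on the supersolution $\overline v$: it is merely in $W_{loc}^{1,p}(\Omega)$, and in the very singular regime $\delta>1$ one expects $\nabla\overline v\notin L^p(\Omega)$. Since $\nabla\psi_1^\varepsilon$ and $\nabla\psi_2^\varepsilon$ both contain $\nabla\overline v$ with bounded coefficients on the support, your claim $\psi_i^\varepsilon\in W_0^{1,p}(\Omega)$ fails in general. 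Passing to $\chi_n\psi_i^\varepsilon$ does not save the limit $n\to\infty$: the boundary error from testing the supersolution inequality is
\[
\int_\Omega |\nabla\overline v|^{p-1}\,|\nabla\chi_n|\,\psi_2^\varepsilon\,dx,
\]
and with only $|\nabla\overline v|\in L^p_{loc}$ you have no mechanism to make this vanish as $\chi_n\uparrow 1$. (A second, smaller issue: $\psi_1^\varepsilon\le\underline v$ does not give $\psi_1^\varepsilon\in L^\infty$, since $\underline v$ need not be bounded; the paper truncates $\underline v$ to $v_n=\min\{\underline v,n\}$ for this reason.)

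The paper's remedy is to \emph{replace} $\overline v$ by an auxiliary function with global $W_0^{1,p}$ regularity. For each $\varepsilon>0$ it minimizes the regularized energy
\[
J_\varepsilon(\omega)=\frac1p\int_\Omega|\nabla\omega|^p-\int_\Omega\!\!\int_0^\omega\!\bigl[a(x)(t+\varepsilon)^{-\delta}+b(x)(t+\varepsilon)^\beta\bigr]dt\,dx
\]
over the convex set $\mathcal C=\{\omega\in W_0^{1,p}(\Omega):0\le\omega\le\overline v\}$, obtaining $\omega_0\in W_0^{1,p}(\Omega)$; Lemma~\ref{lema3.5} then shows $\omega_0$ is a supersolution of the $\varepsilon$-shifted equation (the integrability hypotheses $(h)_1$, $(h)'_2$, $(h)_3$ enter here, to make $J_\varepsilon$ coercive and weakly l.s.c.). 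The Picone comparison is then carried out between $\underline v$ and $\omega_0+\varepsilon$, with the truncation $v_n$ and a \emph{fixed} local cutoff $\phi$ supported in a small ball $B_{R+r}(x_0)$ where $|\{\underline v>\omega_0+\varepsilon\}|>0$; the annular error terms are killed by choosing $r$ small (using $|\nabla\phi|\le Cr^{-t}$ with $t<1/p$), not by sending a cutoff to $1$. Since $\omega_0\le\overline v$, the conclusion $\underline v\le\omega_0+\varepsilon$ yields $\underline v\le\overline v+\varepsilon$ and one lets $\varepsilon\to0$. This auxiliary construction is the missing idea in your sketch.
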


To prove Theorem \ref{unicidade1}, let us  consider the functional $J_{\epsilon} : W_0^{1,p}(\Omega) \rightarrow \mathbb{R}$ defined by $$J_{\epsilon}(\omega) = \frac{1}{p}\displaystyle\int_{\Omega} |\nabla \omega|^{p}dx - \displaystyle\int_{\Omega} F_{\epsilon}(x, \omega)dx, $$
for each $\epsilon > 0$ given,
 and denote by ${\cal{C}}$ the convex and closed set
  $${\cal{C}} = \{ \omega \in W_0^{1,p}(\Omega)~/~ 0 \leq \omega \leq \overline{v}\},$$
 where 
 $$F_{\epsilon}(x, \omega) = \displaystyle\int_0^{\omega}\Big[a(x)(t+\epsilon)^{-\delta} + b(x)(t + \epsilon)^{\beta}\Big]dt$$
 and $\overline{v} \in W_{loc}^{1,p}(\Omega)$ is a supersolution  to the problem (\ref{lsp}).

 \begin{lemma}\label{L2}  If $b \in L^{(\frac{p^*}{\beta +1})'}(\Omega)$ and one of the hypotheses $(h)_1$, $(h)'_2$  or $(h_3)$ holds, then the functional $J_{\epsilon}$ is coercive and  weakly lower semicontinuous  on $\cal{C}$.
\end{lemma}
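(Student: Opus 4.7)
The plan is to decompose $J_\epsilon(\omega) = \frac{1}{p}\|\nabla\omega\|_p^p - \int_\Omega F_\epsilon(x,\omega)\,dx$ and to handle the two properties separately. For coercivity I will control the primitive $F_\epsilon$ pointwise by an expression that grows strictly sub-$p$ in $\omega$, so that the $p$-Dirichlet term dominates as $\|\nabla\omega\|_p\to\infty$. For weak lower semicontinuity the first term is convex and continuous on $W_0^{1,p}(\Omega)$, hence automatically wlsc; the main task will be to prove that $\omega\mapsto\int_\Omega F_\epsilon(x,\omega)\,dx$ is actually weakly continuous on $\mathcal{C}$, via the compact Sobolev embedding together with a Vitali-type argument.

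For coercivity, split $F_\epsilon=F^a_\epsilon+F^b_\epsilon$ according to the two integrand pieces. Integration gives $F^b_\epsilon(x,\omega)\leq C\,b(x)(\omega+\epsilon)^{\beta+1}$, and Hölder with exponent $p^*/(\beta+1)$ combined with the Sobolev inequality and $b\in L^{(p^*/(\beta+1))'}(\Omega)$ produces a bound $C_\epsilon+C\|\nabla\omega\|_p^{\beta+1}$ with $\beta+1<p$. For $F^a_\epsilon$ I would treat the three cases separately: under $(h)_1$ one has $F^a_\epsilon\leq C\,a(x)(\omega+\epsilon)^{1-\delta}$, and a Hölder--Sobolev estimate yields $C_\epsilon+C\|\nabla\omega\|_p^{1-\delta}$ with $1-\delta<1<p$; under $(h)'_2$ the primitive is uniformly bounded by $\epsilon^{1-\delta}/(\delta-1)$, so $\int_\Omega F^a_\epsilon\,dx\leq C_\epsilon\|a\|_1$; under $(h)_3$ one uses the elementary inequality $\ln(1+y)\leq C_\eta(1+y^\eta)$ valid for every $\eta>0$, chooses $\eta>0$ with $\eta s'\leq p^*$ and $\eta<p$ (possible since $s>1$), and applies Hölder with $a\in L^s(\Omega)$ followed by Sobolev. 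In every case $-\int_\Omega F_\epsilon\,dx\geq -C_\epsilon-C\|\nabla\omega\|_p^\sigma$ with $\sigma<p$, so $J_\epsilon$ is coercive on $\mathcal{C}$.

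For weak lower semicontinuity, take a sequence $\omega_n\rightharpoonup\omega$ in $W_0^{1,p}(\Omega)$ with $\omega_n\in\mathcal{C}$; the convex closed set $\mathcal{C}$ is weakly closed, so $\omega\in\mathcal{C}$. By the Rellich--Kondrachov theorem $\omega_n\to\omega$ strongly in each $L^q(\Omega)$ with $q<p^*$, and along a subsequence a.e.\ in $\Omega$; in particular $F_\epsilon(x,\omega_n)\to F_\epsilon(x,\omega)$ a.e. The same pointwise estimates used for coercivity show that $\{F_\epsilon(\cdot,\omega_n)\}$ is equi-integrable on $\Omega$: for each measurable $E\subset\Omega$, Hölder bounds $\int_E F_\epsilon(x,\omega_n)\,dx$ by norms of $a\chi_E$ or $b\chi_E$ in the appropriate Lebesgue spaces times factors uniformly bounded on bounded subsets of $W_0^{1,p}(\Omega)$, and those norms tend to zero as $|E|\to 0$ by absolute continuity of the integral. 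Vitali's convergence theorem then yields $\int_\Omega F_\epsilon(x,\omega_n)\,dx\to\int_\Omega F_\epsilon(x,\omega)\,dx$, and combined with the weak lower semicontinuity of $\omega\mapsto\|\nabla\omega\|_p^p$ this proves $J_\epsilon$ is wlsc. I expect the most delicate step to be the logarithmic case $(h)_3$, where the auxiliary exponent $\eta$ must be chosen compatible with both the integrability of $a$ and sub-$p$ Sobolev growth; the remaining verifications are routine Hölder--Sobolev estimates.
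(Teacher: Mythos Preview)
Your proposal is correct and follows essentially the same approach as the paper: the same Hölder--Sobolev estimates case by case for coercivity (including the logarithmic bound $\ln(z+\epsilon)\le C_\epsilon(z+\epsilon)^t$ in case $(h)_3$, where the paper takes $t=\min\{p^*/s',\,p-1\}$, compatible with your choice of $\eta$), and Vitali's theorem via equi-integrability for the nonlinear term in the weak lower semicontinuity argument. The only cosmetic difference is that for $\delta>1$ the paper invokes Lebesgue's dominated convergence directly rather than Vitali, which of course amounts to the same thing since the uniform $L^1$ bound $F_\epsilon^a\le C_\epsilon a(x)$ already gives equi-integrability.
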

\begin{proof} Set  $\omega \in \cal{C} $. First, we note  that if  $(h_3)$ holds, then there exists  a $C_{\epsilon} >0$ such that $ln| z + \epsilon| \leq C_{\epsilon} (z + \epsilon)^t$ for all $z \geq 0$ and for  $ t =  \min\{{p^*}/{s'}, p-1\}>0$ fixed. So, by  using either this fact,  $(h)_1$ or $(h)'_2$ and Sobolev embedding, we obtain 
$$ J_{\epsilon}(\omega) \geq \left\{\begin{array}{l} \frac{1}{p}\|\nabla \omega \|_p^p - C\Big[ \|a\|_{(\frac{p^*}{1-\delta})'}\|\omega\|_{p^*}^{1-\delta} + \|b\|_{(\frac{p^*}{1+\beta})'}\|\omega\|_{p^*}^{\beta +1} + 1\Big]  \ \ \mbox{if} \ \ 0 < \delta <  1, \\
\frac{1}{p}\|\nabla \omega\|_p^p - C\Big[\|a\|_{s}\|\omega\|_{p^*}^t + \|b\|_{(\frac{p^*}{1+\beta})'}\|\omega \|_{p^*}^{\beta +1} + 1\Big]  \ \ \mbox{if} \ \ \delta = 1,  \\
\frac{1}{p}\|\nabla \omega\|_p^p - C\Big[\|b\|_{(\frac{p^*}{1+\beta})'}\|\omega \|_{p^*}^{\beta +1} + 1\Big]  \ \ \mbox{if} \ \ \delta > 1  
 \end{array}\right.$$  
 that leads to the coerciveness of  $J_{\epsilon}$ in all the cases.
 
Below, let us show that $J_{\epsilon}$ is weakly lower semicontinuous  on $\cal{C}$. Let $(\omega_n) \subset \cal{C}$ such that $\omega_n \rightharpoonup \omega$ in $W_0^{1,p}(\Omega)$.
Suppose first that $0 < \delta < 1$. We claim  that 
 \begin{eqnarray}\label{vitali} \displaystyle\int_{\Omega} \displaystyle\int_0^{\omega_n} a(x)(s+\epsilon)^{-\delta}dtdx \stackrel{n \rightarrow \infty}{\longrightarrow} \displaystyle\int_{\Omega} \displaystyle\int_0^{\omega} a(x)(s+\epsilon)^{-\delta}dtdx. \end{eqnarray} 
 In fact, since  $a \in L^{\left(\frac{p^*}{1 - \delta}\right)'}(\Omega)$,  it follows from the absolute continuity of the Lebesgue integral that for each $\epsilon' > 0$ there exists $\delta > 0$ such that 
$$ \displaystyle\int_A a(x)^{\frac{p^*}{p^* + \delta -1}}dx \leq \Big(\frac{\epsilon'}{C_1}\Big)^{\frac{p^*}{p^* + \delta -1}}$$
for each measurable subset $A$ of $\Omega$ such that  $|A| < \delta$.
 
So, the boundedness of $(\omega_n)$ in $L^{p^*}(\Omega)$ together with the above information lead us to 
$$ \displaystyle\int_A a(x)(\omega_n + \epsilon)^{1 - \delta}dx \leq \Big(\displaystyle\int_A a(x)^{\frac{p^*}{p^* + \delta -1}}dx\Big)^{\frac{p^* + \delta -1}{p^*}}\Big(\displaystyle\int_{\Omega} (\omega_n + \epsilon)^{p^*}dx \Big)^{\frac{1 - \delta}{p^*}}  \leq \epsilon',  $$
that is, $(\omega_n)$ uniformly integrable over $\Omega$. If $\delta=1$, we can redo the above arguments. So,  in both cases our claim follows by applying Vitali's Convergence Theorem.  In the case  $ \delta >1$, the convergence (\ref{vitali}) follows from the classical Lebesgue's Theorem.

Following close arguments as above, we obtain that 
 $$ \displaystyle\int_{\Omega} \displaystyle\int_0^{\omega_n} b(x)(s+\epsilon)^{\beta}dtdx \stackrel{n \rightarrow \infty}{\longrightarrow} \displaystyle\int_{\Omega} \displaystyle\int_0^{\omega} b(x)(s+\epsilon)^{\beta}dtdx $$
as well. This is enough to finish the proof of the Lemma. 
 \end{proof}
  \fim
 \vspace{0.2cm}
 
 Since $\cal{C}$ is convex and closed in the $W_0^{1,p}(\Omega)$-topology,  it follows from Lemma \ref{L2} that there exists a $\omega_0 \in \cal{C}$ such that
$$J_{\epsilon}(\omega_0) = \displaystyle\inf_{\omega  \in \cal{C}}J_{\epsilon}(\omega). $$

\begin{lemma}\label{lema3.5} For all $\varphi \geq 0$ {in} $ C_c^{\infty}(\Omega),$ we have $$\displaystyle\int_{\Omega} |\nabla \omega_0|^{p-2}\nabla \omega_0 \nabla \varphi dx \geq \displaystyle\int_{\Omega} \Big[ a(\omega_0 + \epsilon)^{-\delta} + b(\omega_0 + \epsilon)^{\beta}\Big] \varphi dx. $$
\end{lemma}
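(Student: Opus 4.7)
The strategy is to exploit the minimality of $\omega_0$ in the convex set $\mathcal{C}=\{0\leq \omega \leq \overline v\}$ through an obstacle-problem variational inequality, with a test perturbation that respects the upper bound $\omega\leq \overline v$. Fix $\varphi \in C_c^\infty(\Omega)$ with $\varphi \geq 0$. The naive competitor $\omega_0+\tau\varphi$ can violate $\omega\leq \overline v$ on the contact set, so I would instead use the truncated competitor $\psi_\tau := \min(\omega_0+\tau\varphi,\overline v)=\omega_0+\tau\varphi-\eta_\tau$, where $\eta_\tau := (\omega_0+\tau\varphi-\overline v)^+ \geq 0$, for $\tau>0$ small. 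One checks $\psi_\tau \in \mathcal{C}$ and $\eta_\tau \in W_0^{1,p}(\Omega)$ (vanishes off $\mathrm{supp}\,\varphi$, and in trace on $\partial\Omega$ since $\omega_0=0\leq \overline v$ there).

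Because $\epsilon>0$ makes the integrand of $J_\epsilon$ smooth in $\omega$ with locally bounded derivative $s\mapsto a(x)(s+\epsilon)^{-\delta}+b(x)(s+\epsilon)^\beta$, the functional $J_\epsilon$ is G\^ateaux differentiable in this direction, and the minimality of $\omega_0$ yields $0 \leq J_\epsilon(\psi_\tau)-J_\epsilon(\omega_0) = \tau\,\langle J_\epsilon'(\omega_0),\varphi\rangle - \langle J_\epsilon'(\omega_0),\eta_\tau\rangle + o(\tau)$, so after dividing by $\tau$,
$$\langle J_\epsilon'(\omega_0),\varphi\rangle \geq \frac{1}{\tau}\langle J_\epsilon'(\omega_0),\eta_\tau\rangle + o(1). \qquad (\star)$$
The heart of the argument is to show $\liminf_{\tau\to 0^+}\tau^{-1}\langle J_\epsilon'(\omega_0),\eta_\tau\rangle \geq 0$; once this is in place, letting $\tau\to 0^+$ in $(\star)$ delivers precisely the claimed supersolution inequality.

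I would establish the required $\liminf$ bound by testing the weak supersolution inequality satisfied by $\overline v$ against $\eta_\tau$ itself---this is admissible after a density argument justified by the integrability conditions of $(h)_1$--$(h)_3$ together with $b\in L^{(p^*/(\beta+1))'}(\Omega)$---and subtracting the resulting inequality from $\langle J_\epsilon'(\omega_0),\eta_\tau\rangle$. Writing $\nabla \eta_\tau=(\nabla\omega_0+\tau\nabla\varphi-\nabla\overline v)\,\chi_{S_\tau}$ on $S_\tau:=\{\omega_0+\tau\varphi>\overline v\}$ and applying the $p$-Laplacian monotonicity $(|\xi|^{p-2}\xi-|\zeta|^{p-2}\zeta)\cdot(\xi-\zeta)\geq 0$, the leading contributions split into a nonnegative monotone quadratic form plus errors of order $O(\tau)$, since $\eta_\tau\leq \tau\varphi$ and (by Stampacchia) $\nabla\omega_0=\nabla\overline v$ a.e.\ on the contact set $\{\omega_0=\overline v\}$.

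\textbf{Main obstacle.} The delicate step is exactly the bound $\liminf_{\tau \to 0^+}\tau^{-1}\langle J_\epsilon'(\omega_0),\eta_\tau\rangle\geq 0$. The contact set $\{\omega_0=\overline v\}$ can have positive Lebesgue measure; on it $\eta_\tau=\tau\varphi$, so the support of $\eta_\tau$ does not shrink as $\tau\to 0^+$ and a naive dominated-convergence/vanishing-measure argument fails. The Stampacchia identity of the gradients on the contact set, the monotonicity of $-\Delta_p$, and the supersolution property of $\overline v$ must be combined in a single estimate to absorb what would otherwise be an $O(1)$ obstruction, and this is the only place where the fact that $\overline v$ is a supersolution enters the proof.
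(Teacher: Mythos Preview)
Your plan is essentially the paper's own argument: one perturbs $\omega_0$ by the truncated competitor $v_t=\min(\omega_0+t\varphi,\overline v)$, writes $v_t-\omega_0=t\varphi-\omega_t$ with $\omega_t=(\omega_0+t\varphi-\overline v)^+$, tests the supersolution inequality for $\overline v$ against $\omega_t$, and passes to the limit $t\to 0^+$ using the monotonicity of $-\Delta_p$.

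Two remarks. First, instead of Taylor-expanding $J_\epsilon(\psi_\tau)-J_\epsilon(\omega_0)$ and worrying about an $o(\tau)$ remainder (which is delicate because $\|\nabla\eta_\tau\|_p$ need not be $O(\tau)$ off the contact set), the paper more cleanly takes $\sigma(s)=J_\epsilon(sv_t+(1-s)\omega_0)$ and uses $\sigma'(0^+)\geq 0$ to get the exact linear variational inequality $\langle J_\epsilon'(\omega_0),\,t\varphi-\omega_t\rangle\geq 0$; this bypasses the expansion entirely and yields your $(\star)$ without the $o(1)$. Second, your invocation of Stampacchia's lemma on the contact set $\{\omega_0=\overline v\}$ is the right way to handle the limit: the paper states $|\mathrm{supp}\,\omega_t|\to 0$, which is not literally correct when the contact set has positive measure, but the conclusion survives precisely because $\nabla\omega_0=\nabla\overline v$ a.e.\ there, so the integrand $(|\nabla\omega_0|^{p-2}\nabla\omega_0-|\nabla\overline v|^{p-2}\nabla\overline v)\cdot\nabla\varphi$ vanishes on that portion of $\mathrm{supp}\,\omega_t$, and the remaining set $\{0<\overline v-\omega_0<t\varphi\}$ does shrink to measure zero.
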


\noindent\begin{proof} First, given a non-negative $\varphi \in  C_c^{\infty}(\Omega), $ let us define $v_t: = \min\{\omega_0 + t\varphi, \overline{v}\}$ and \linebreak $\omega_t := (\omega_0 + t\varphi - \overline{v})^+$ for $t>0$. So,  it follows from $\omega_0 \leq \overline{v}$ that  $v_t = \omega_0$ and $\omega_t = 0 $ in $\Omega \backslash supp~\varphi$. From these, we have $v_t \in \cal{C}$, because  $ \overline{v} \in W^{1,p}(supp ~ \varphi)$ and $0 \leq v_t \leq\overline{v}$.
Besides this, since  $\overline{v}>0$ (see definition \ref{D3}),  we can find a $t > 0$ enough small such that $t\varphi \leq 2\overline{v}  - \omega_0$, that is, $\omega_t \in \cal{C}$ as well.

We define $\sigma : [0, 1] \rightarrow \mathbb{R}$ by $\sigma(s) = J_{\epsilon}\Big(sv_t +(1-s)\omega_0\Big). $ Then
 \begin{eqnarray*} 0 & \leq & \displaystyle\lim_{s \rightarrow 0^+} \frac{\sigma(s) - \sigma(0)}{s} = \displaystyle\lim_{s \rightarrow 0^+} \frac{J_{\epsilon}\Big(sv_t +(1-s)\omega_0\Big) - J_{\epsilon}(0) }{s} \\
 & = & \displaystyle\int_{\Omega}|\nabla \omega_0|^{p-2}\nabla \omega_0 \nabla ( v_t - \omega_0)dx - \displaystyle\int_{\Omega}a(x)(\omega_0 + \epsilon)^{-\delta}(v_t - \omega_0)dx - \displaystyle\int_{\Omega}b(x)( \omega_0 + \epsilon)^{\beta}(v_t - \omega_0)dx. 
 \end{eqnarray*}
Hence, using  that $v_t - \omega_0 = t\varphi -\omega_t, $ by the previous inequality we get
\begin{eqnarray}\label{3.12}
 0 & \leq & t \displaystyle\int_{\Omega} \Big[|\nabla \omega_0|^{p-2}\nabla \omega_0\nabla \varphi - a(x)(\omega_0 + \epsilon)^{-\delta}\varphi - b(x)(\omega_0 + \epsilon)^{\beta}\varphi \Big]dx \nonumber \\
 & & - \displaystyle\int_{\Omega} \Big[|\nabla \omega_0|^{p-2}\nabla \omega_0\nabla \omega_t - a(x)(\omega_0 + \epsilon)^{-\delta}\omega_t - b(x)(\omega_0 + \epsilon)^{\beta}\omega_t \Big]dx.
 \end{eqnarray}
However, since $\overline{v}$  is a supersolution of (\ref{lsp}) and $0\leq \omega_t  \in W_0^{1,p}(\Omega) \cap L^{\infty}_{loc}(\Omega)$ (note that $\omega_t \leq t\varphi$),  by the classical density arguments one obtains
\begin{eqnarray}\label{1.12} \displaystyle\int_{\Omega} |\nabla \overline{v}|^{p-2}\nabla \overline{v} \nabla \omega_tdx \geq \displaystyle\int_{\Omega} \Big(a(x)\overline{v}^{-\delta} + b(x)\overline{v}^{\beta}\Big)\omega_t dx. \end{eqnarray}
Dividing both the sides of (\ref{3.12}) by $t > 0$ and using (\ref{1.12}), one obtains
\begin{eqnarray}\label{112}
0 &\leq & \displaystyle\int_{\Omega} \Big[ |\nabla\omega_0|^{p-2}\nabla \omega_0 \nabla \varphi - a(x)(\omega_0 + \epsilon)^{-\delta}\varphi - b(x)(\omega_0  + \epsilon)^{\beta}\Big]dx \nonumber\\
& & + \frac{1}{t}\displaystyle\int_{\Omega} \Big(|\nabla \overline{v}|^{p-2}\nabla \overline{v} - |\nabla \omega_0|^{p-2}\nabla \omega_0 \Big)\nabla \omega_t dx \\
& & + \frac{1}{t} \displaystyle\int_{\Omega} \Big[a(x)\Big((\omega_0 +\epsilon)^{-\delta} - \overline{v}^{-\delta}\Big) + b(x)\Big((\omega_0 +\epsilon)^{\beta} - \overline{v}^{\beta} \Big)\Big] \omega_t dx.\nonumber
 \end{eqnarray}
Below, let us estimate the two last integral in (\ref{112}). First, by using $\omega_t \rightarrow 0$ when ${t \rightarrow 0^+}$, the limit $ |supp ~\omega_t| \stackrel{t \rightarrow 0^+}{\longrightarrow} 0$ and the  monotonicity of the $p$-Laplacian operator, we obtain
\begin{eqnarray*} & &\frac{1}{t}\displaystyle\int_{\Omega} \Big( |\nabla \omega_0|^{p-2}\nabla \omega_0 - |\nabla \overline{v}|^{p-2}\nabla \overline{v}\Big)\nabla \omega_t dx \geq \displaystyle\int_{supp \ \omega_t}  \Big( |\nabla \omega_0|^{p-2}\nabla \omega_0 - |\nabla \overline{v}|^{p-2}\nabla \overline{v}\Big)\nabla \varphi dx \stackrel{t \rightarrow 0}{\longrightarrow} 0. 
\end{eqnarray*}

To last integral, noting that $\omega_0 \leq \overline{v}$, we have 
\begin{eqnarray*}
& & \frac{1}{t}\displaystyle\int_{supp \ \omega_t} \Big[a(x)\Big(\overline{v}^{-\delta} - (\omega_0 +\epsilon)^{-\delta} \Big) + b(x)\Big(\overline{v}^{\beta}  - (\omega_0 +\epsilon)^{\beta} \Big)\Big] \omega_t dx \\
& \geq & -\displaystyle\int_{supp \ \omega_t} \Big[ a(x)\Big|\overline{v}^{-\delta} - (\omega_0 +\epsilon)^{-\delta}\Big|  + b(x)\Big|\overline{v}^{\beta}  - (\omega_0 +\epsilon)^{\beta}\Big|\Big] \varphi dx \stackrel{t \rightarrow 0}{\longrightarrow} 0.
\end{eqnarray*}
Hence, by using these information in (\ref{112}), we conclude the proof. 
\fim
  \end{proof}
\smallskip

\noindent\textbf{\textit{Proof of Theorem 2.1-Conclusion:}}
 Let us set 
 $$\Omega_{\epsilon} := \{ x \in \Omega ~/~ \underline{v}(x) > \omega_0(x) + \epsilon\} ~~\mbox{and}~~ \Omega_{\epsilon}^n = \Omega_{\epsilon} \cap \{ x \in \Omega ~/~ \underline{v}(x)  < n\}$$
 for each $\epsilon>0$ given and $n\in \mathbb{N}$. So,  $\Omega_{\epsilon} = \bigcup_{n \in \mathbb{N}} \Omega_{\epsilon}^n. $ 
 
Assume that $|\Omega_{\epsilon}| >0$ for some $\epsilon>0$. Then it is clear that  $|\Omega_{\epsilon}^n| > 0 $ for all $n\geq n_0'$ for some $n_0' \in \mathbb{N}$, because  $ \Omega_{\epsilon}^n  \subset \Omega_{\epsilon}^{n +1}. $ Let us fix one of this $n$. We claim that there exists a ball $B_{R}(x_0) \subset \subset \Omega$ such that $|B_{R}(x_0) \cap \Omega_{\epsilon}^n| > 0. $ Indeed,  from the compactness of $ \overline{\Omega }$, we can find an open set $ B\subset \mathbb{R}^N$ such that  $|B \cap \Omega_{\epsilon}^n| > 0$. Denote this by $|B \cap \Omega_{\epsilon}^n| = 2\delta>0$. If $B \cap \partial \Omega \neq \emptyset, $ set $A_{\epsilon_0} = \{x \in \Omega ~/~ dist(x, \partial \Omega) < \epsilon_0\}$, where $\epsilon_0 > 0$  is taken in such a way that $|B \cap A_{\epsilon_0}| < \delta. $ In this case, $|\overline{B \cap A_{\epsilon_0}^C} \cap \Omega_{\epsilon}^n| > \delta$. So, our claim follows from the fact that $\overline{B \cap A_{\epsilon_0}^C}$ is a compact  set.

Set $\phi \in C_c^{\infty}(\Omega, [0, 1])$ such that  $supp \ \phi \subset B_{R + r}(x_0)$, $\phi = 1$ in $ B_R(x_0)$ and $|\nabla \phi| \leq Cr^{-t}$ in $B_{R + r}(x_0) \backslash B_R(x_0)$  for an appropriate $t>0$, that will be determined later. So, it is a consequence of this construction that 
$0\neq \varphi_1,\varphi_2 \in L^p(\Omega)\cap L_c^{\infty}(\Omega)$, where $v_n := \min\{\underline{v}, n\} $,
\begin{center}{$ \varphi_1 :=  {\phi\Big[v_n^p - (\omega_0 + \epsilon)^p \Big]^+}{v_n^{1-p}}$  \  and \  
$ \varphi_2 :=  {\phi\Big[v_n^p - (\omega_0 + \epsilon)^p \Big]^+}{(\omega_0 + \epsilon)^{1-p}}. $}
\end{center}

Thus,
\begin{eqnarray*}
\nabla \varphi_1 = \Big[ \frac{v_n^p - (\omega_0 + \epsilon)^{p}}{v_n^{p-1}}\Big]^+ \nabla \phi  + \phi \Big[\nabla v_n - p\frac{(\omega_0 + \epsilon)^{p-1}}{v_n^{p-1}} \nabla (\omega_0 + \epsilon) + (p-1)\frac{(\omega_0 + \epsilon)^p}{v_n^p} \nabla v_n \Big] \chi_{[v_n \geq \omega_0 + \epsilon]}
\end{eqnarray*}
and
\begin{eqnarray*}
\nabla \varphi_2 = \Big[\frac{
v_n^p - (\omega_0 + \epsilon)^{p}}{(\omega_0 + \epsilon)^{p-1}}\Big]^+ \nabla \phi + \phi \Big[\frac{pv_n^{p-1}}{(\omega_0 + \epsilon )^{p-1}}\nabla v_n -  \nabla (\omega_0 + \epsilon) - (p-1)\frac{v_n^p}{(\omega_0 + \epsilon)^p} \nabla (\omega_0 + \epsilon) \Big] \chi_{[v_n \geq \omega_0 + \epsilon]},
\end{eqnarray*}
which  lead us to conclude that $|\nabla \varphi_1| ,|\nabla \varphi_2| \in L^p(\Omega) $, because  $0 < c_K \leq v_n  \leq n $ in $K = supp ~\phi$.

Since $\varphi_1 $, $\varphi_2 \geq 0$ and 
$\varphi_1 $, $\varphi_2 \in W_0^{1,p}(\Omega) \cap L_c^{\infty}(\Omega)$, we get by densities arguments that
\begin{eqnarray*}
\displaystyle\int_{\Omega} |\nabla \underline{v}|^{p-2}\nabla\underline{v} \nabla \varphi_1 dx \leq \displaystyle\int_{\Omega}\Big(a(x) \underline{v}^{-\delta} + b(x)\underline{v}^{\beta}\Big)\varphi_1 dx
\end{eqnarray*}
and
\begin{eqnarray*}
\displaystyle\int_{\Omega} |\nabla \omega_0|^{p-2}\nabla \omega_0 \nabla \varphi_2dx \geq \displaystyle\int_{\Omega} \Big[a(x)(\omega_0 + \epsilon)^{-\delta} + b(x)(\omega_0 + \epsilon)^{\beta}\Big]\varphi_2 dx
\end{eqnarray*}
hold, where $\omega_0$ is given in Lemma \ref{lema3.5}.

So, by calculating and using the above inequalities, we obtain
\begin{eqnarray*}
& & \displaystyle\int_{\Omega} |\nabla \underline{v}|^{p-2}\nabla \underline{v} \nabla \phi \Big[ \frac{v_n^p - (\omega_0 + \epsilon)^{p}}{v_n^{p-1}}\Big]^+ dx + \displaystyle\int_{[\underline{v}  \leq n]} |\nabla \underline{v}|^{p-2}\nabla \underline{v} \nabla \Big[\frac{\underline{v}^p - (\omega_0 + \epsilon)^p}{\underline{v}^{p-1}}\Big]^+ \phi dx \\
& &- p\displaystyle\int_{[\underline{v} > n]} |\nabla \underline{v}|^{p-2}\nabla \underline{v}  \Big[\frac{(\omega_0 + \epsilon)^{p-1}\nabla( \omega_0 + \epsilon)}{n^{p-1}}\Big] \chi_{[\omega_0 + \epsilon < n]}\phi dx \leq \displaystyle\int_{\Omega} \Big(a(x)\underline{v}^{-\delta} + b(x)\underline{v}^{\beta}\Big)\varphi_1 dx
\end{eqnarray*}
and 
\begin{eqnarray*}
& & \displaystyle\int_{\Omega} |\nabla \omega_0|^{p-2}\nabla\omega_0 \nabla \phi \Big[ \frac{v_n^p - (\omega_0 + \epsilon)^{p}}{(\omega_0 + \epsilon)^{p-1}}\Big]^+ dx + \displaystyle\int_{[\underline{v} \leq n]} |\nabla \omega_0|^{p-2}\nabla \omega_0 \nabla \Big[\frac{\underline{v}^p - (\omega_0 + \epsilon)^p}{(\omega_0 + \epsilon)^{p-1}}\Big]^+ \phi dx \\
& & + \displaystyle\int_{[\underline{v} > n]} |\nabla \omega_0|^{p-2}\nabla \omega_0 \nabla \Big[\frac{n^p - (\omega_0 + \epsilon)^p}{(\omega_0 + \epsilon)^{p-1}}\Big]^+ \phi dx \geq \displaystyle\int_{\Omega} \Big[a(x)(\omega_0 + \epsilon)^{-\delta} + b(x)(\omega_0 + \epsilon)^{\beta}\Big]\varphi_2 dx.
\end{eqnarray*}
Hence, by combining the previous inequalities we have
\begin{eqnarray*} 
& & \displaystyle\int_{\Omega} |\nabla\underline{v}|^{p-2}\nabla\underline{v} \nabla \phi\Big[ \frac{v_n^p - (\omega_0 + \epsilon)^{p}}{v_n^{p-1}}\Big]^+ dx + \displaystyle\int_{[\underline{v} \leq n]} |\nabla \underline{v}|^{p-2}\nabla \underline{v} \nabla \Big[\frac{\underline{v}^p - (\omega_0 + \epsilon)^p}{\underline{v}^{p-1}}\Big]^+ \phi dx \\
& & -p\displaystyle\int_{[\underline{v} > n]} |\nabla \underline{v}|^{p-2}\nabla \underline{v}  \Big[\frac{(\omega_0 + \epsilon)^{p-1}\nabla( \omega_0 + \epsilon)}{n^{p-1}}\Big] \chi_{[\omega_0 + \epsilon < n]}\phi dx - \displaystyle\int_{\Omega} |\nabla \omega_0|^{p-2}\nabla\omega_0 \nabla \phi \Big[ \frac{v_n^p - (\omega_0 + \epsilon)^{p}}{(\omega_0 + \epsilon)^{p-1}}\Big]^+ dx \\
& & - \displaystyle\int_{[\underline{v}  \leq n]} |\nabla \omega_0|^{p-2}\nabla \omega_0 \nabla \Big[\frac{\underline{v}^p - (\omega_0 + \epsilon)^p}{(\omega_0 + \epsilon)^{p-1}}\Big]^+ \phi dx   
 - \displaystyle\int_{[\underline{v} > n]} |\nabla \omega_0|^{p-2}\nabla \omega_0 \nabla \Big[\frac{n^p - (\omega_0 + \epsilon)^p}{(\omega_0 + \epsilon)^{p-1}}\Big]^+ \phi dx  \\
& = & \displaystyle\int_{\Omega} |\nabla \underline{v}|^{p-2}\nabla \underline{v} \nabla \varphi_1 dx - \displaystyle\int_{\Omega} |\nabla \omega_0|^{p-2}\nabla \omega_0 \nabla \varphi_2 dx  \\
& \leq & \displaystyle\int_{\Omega} a(x) \Big[ \frac{\underline{v}^{-\delta}}{v_n^{p-1}} - \frac{(\omega_0 + \epsilon)^{-\delta}}{(\omega_0 + \epsilon)^{p-1}} \Big][v_n^p - (\omega_0 + \epsilon)^p]^+\phi dx \\
& & + \displaystyle\int_{\Omega}  b(x) \Big[ \frac{\underline{v}^{\beta}}{v_n^{p-1}} - \frac{(\omega_0 + \epsilon)^{\beta}}{(\omega_0 + \epsilon)^{p-1}} \Big][v_n^p - (\omega_0 + \epsilon)^p]^+\phi dx.
\end{eqnarray*}

Now, by the previous inequality, the next one
$$ -\displaystyle\int_{[\underline{v}  > n]} |\nabla \omega_0|^{p-2} \nabla \omega_0 \nabla \Big[\frac{n^p - (\omega_0 + \epsilon)^p}{(\omega_0 + \epsilon)^{p-1}}\Big]\phi dx  =  \displaystyle\int_{[\underline{v}> n]}|\nabla \omega_0|^p\Big[ 1 + \frac{ n^p(p - 1)}{(\omega_0 + \epsilon)^{p}}\Big]dx  \geq 0 
$$ 
and by the classical Picones's inequality, we get
\begin{eqnarray} \label{3.13}
0 & {\leq} & \displaystyle\int_{[\underline{v} \leq n ]} |\nabla \underline{v}|^{p-2}\nabla \underline{v} \nabla \Big[ \frac{\underline{v}^p - (\omega_0 + \epsilon)^{p}}{\underline{v}^{p-1}}\Big]^+\phi dx 
 -   \displaystyle\int_{[\underline{v}  \leq n]} |\nabla \omega_0|^{p-2}\nabla \omega_0 \nabla \Big[\frac{\underline{v}^p - (\omega_0 + \epsilon)^p}{(\omega_0 + \epsilon)^{p-1}}\Big]^+ \phi dx \nonumber\\
& \leq & \frac{p}{n^{p-1}}\displaystyle\int_{[\underline{v}  > n]} |\nabla \underline{v}|^{p-1}|\nabla \omega_0|( \omega_0 + \epsilon)^{p-1} \chi_{[\omega_0 + \epsilon < n]}\phi dx +  \displaystyle\int_{B_{R+r}\backslash B_R} |\nabla \underline{v}|^{p-1}|\nabla \phi|\Big[ \frac{v_n^p - (\omega_0 + \epsilon)^p}{v_n^{p-1}} \Big]^+dx \nonumber\\
& & + \displaystyle\int_{B_{R+r} \backslash B_R} |\nabla \omega_0|^{p-1}|\nabla \phi| \Big[ \frac{v_n^p - (\omega_0 + \epsilon)^p}{(\omega_0 + \epsilon)^{p-1}}\Big]^+ dx  \nonumber\\
 &  & + \displaystyle\int_{\Omega} a(x) \Big[ \frac{\underline{v}^{-\delta}}{v_n^{p-1}} - \frac{(\omega_0 + \epsilon)^{-\delta}}{(\omega_0 + \epsilon)^{p-1}} \Big][v_n^p - (\omega_0 + \epsilon)^p]^+\phi dx \nonumber\\
 &  & + \displaystyle\int_{\Omega} b(x) \Big[ \frac{\underline{v}^{\beta}}{v_n^{p-1}} - \frac{(\omega_0 + \epsilon)^{\beta}}{(\omega_0 + \epsilon)^{p-1}} \Big][v_n^p - (\omega_0 + \epsilon)^p]^+\phi dx.
\end{eqnarray}

Below, let us estimate the integrals in (\ref{3.13}). To the last two integrals, we can deduce by the assumption $a+b>0$, the inequality $\underline{v}^{-\delta} \leq v_n^{-\delta}$ for all $n \in \mathbb{N}$ and Lebesgue's Theorem, that
 $$\displaystyle\int_{\Omega} a(x) \Big[ \frac{\underline{v}^{-\delta}}{v_{n_0}^{p-1}} - \frac{(\omega_0 + \epsilon)^{-\delta}}{(\omega_0 + \epsilon)^{p-1}} \Big][v_{n_0}^p - (\omega_0 + \epsilon)^p]^+\phi dx + \displaystyle\int_{\Omega} b\Big( \frac{\underline{v}^{\beta}}{v_{n_0}^{p-1}} - \frac{(\omega_0 + \epsilon)^{\beta}}{(\omega_0 + \epsilon)^{p-1}}\Big)\Big[v_{n_0}^{p} - (\omega_0 + \epsilon)^p\Big]^+\phi dx < -4\epsilon', $$
 holds for some  $\epsilon' > 0$ and $n_0 >1$ large.

Now, lets consider the first integral in the second line. 
We claim that $|[\underline{v} > n]| \stackrel{n \rightarrow \infty}{\longrightarrow} 0. $  Indeed, otherwise would exist $\delta' > 0$  and a subsequence $\mathbb{N}' \subset \mathbb{N}$ such that $|[(\underline{v} - \epsilon)^+ > n -\epsilon]| = |[\underline{v} > n]| > \delta'$ for all $n \in \mathbb{N}'$. By using Hypothesis $(\underline{v} - \epsilon)^+ \in W_0^1(\Omega)$, we would have
$$(n -\epsilon)\delta' < \displaystyle\int_{[(\underline{v} - \epsilon)^+ > n -\epsilon]}(\underline{v} - \epsilon)^+dx \leq \displaystyle\int_{\Omega}(\underline{v} - \epsilon)^+ dx \leq C \Vert \nabla  (\underline{v} - \epsilon)^+ \Vert_p < \infty, \ \forall n \in \mathbb{N}', $$
which is absurd. Therefore, from $|[\underline{v} > n]| \stackrel{n \rightarrow \infty}{\longrightarrow} 0 $ we are able to choose  an  $n_0\geq n_0'$ sufficiently large, such that
$$ \Big|{ p\displaystyle\int_{[\underline{v} > n_0]} |\nabla \underline{v}|^{p-2}\nabla \underline{v}  \Big[\frac{(\omega_0 + \epsilon)^{p-1}\nabla( \omega_0 + \epsilon)}{{n_0}^{p-1}}\Big] \chi_{[\omega_0 + \epsilon < n_0]}\phi  }dx \Big| \leq \Big(\displaystyle\int_{[\underline{v} > n_0]} |\nabla \underline{v}|^p \phi^p\Big)^{\frac{p-1}{p}}\|\nabla \omega_0\|_p \leq \epsilon'.$$ 

To the first integral on the ring ${B_{R + r} \backslash B_R }$, we note that the choice of $\phi$ lead us to obtain that
\begin{eqnarray*}
 \displaystyle\int_{B_{R + r} \backslash B_R } |\nabla \underline{v}|^{p-1} |\nabla \phi| \Big[\frac{v_{n_0}^p - (\omega_0 + \epsilon)^p}{v_{n_0}^{p-1}}\Big]^+ dx & \leq & \displaystyle\int_{B_{R + r} \backslash B_R } |\nabla \underline{v}|^{p-1} |\nabla \phi|n_0 dx \\
 & \leq & Cn_0 \|\nabla \phi\|_{L^p(B_{R + r} \backslash B_R)} \\
 & \leq & Cn_0 r^{-t} |B_{R + r} \backslash B_R|^{\frac{1}{p}} \leq C_1n_0 r^{-t + \frac{1}{p}}
 \end{eqnarray*}
 holds for any $t>0$. By taking a $ t < {1}/{p}$, we can choose $ r > 0$ sufficiently small such that
 $$ \displaystyle\int_{B_{R + r} \backslash B_R } |\nabla \underline{v}|^{p-1} |\nabla \phi| \Big[\frac{v_{n_0}^p - (\omega_0 + \epsilon)^p}{v_{n_0}n^{p-1}}\Big]^+ dx < \epsilon'. $$
In a similar way, we can infer that  
 $$ \displaystyle\int_{B_{R + \delta} \backslash B_R} |\nabla \omega_0|^{p-1} |\nabla \phi| \Big[ \frac{v_{n_0}^p - (\omega_0 + \epsilon)^p}{(\omega_0 + \epsilon)^{p-1}}\Big]^+ dx < \epsilon'$$ 
as well. Hence, getting back to the inequality (\ref{3.13}) and using the above informations, we get 
 \begin{eqnarray*} 
 0  \leq \displaystyle\int_{[\underline{v} \leq n_0]} |\nabla \underline{v}|^{p-2}\nabla \underline{v} \nabla \Big(\frac{\underline{v}^p - (\omega_0 + \epsilon)^p}{\underline{v}^{p-1}}\Big)\phi dx - \displaystyle\int_{[\underline{v} \leq n_0]} |\nabla \omega_0|^{p-2}\nabla \omega_0 \nabla \Big(\frac{\underline{v}^p - (\omega_0 + \epsilon)^p}{(\omega_0 + \epsilon)^{p-1}}\Big)\phi dx < 0, 
 \end{eqnarray*} 
 which is an absurd. Therefore $|\Omega_{\epsilon}^n| = 0$ for all $n$, which implies $|\Omega_{\epsilon}| =  0$ and so  $ \underline{v} \leq \omega_ 0 + \epsilon \leq \overline{v} + \epsilon$ a.e in $\Omega$ for all $\epsilon > 0$, whence $\underline{v} \leq {\overline{v}}$.
 
To finish the proof, let us assume that $\underline{v} $,  $\overline{v} \in W_0^{1,p}(\Omega)$, (\ref{3.9}) is satisfied for all $0 \leq \varphi \in W_0^{1,p}(\Omega)$ and $(\underline{v} - \overline{v})^+ \neq 0$. By defining $\underline{v}_n^{\epsilon}(x) := \min\{\underline{v}(x) +\epsilon, n\}$,  $\overline{v}_n^{\epsilon}(x) :=\min\{\overline{v}(x)+\epsilon, n\}$ and the test functions
$$  \varphi_1 = \Big[(\underline{v}_n^{\epsilon})^p - (\overline{v}_n^{\epsilon})^p\Big]^+(\underline{v}_n^{\epsilon})^{1-p} ~\mbox{and } ~ \varphi_2 = \Big[(\underline{v}_n^{\epsilon})^p - (\overline{v}_n^{\epsilon})^p\Big]^+(\overline{v}_n^{\epsilon})^{1-p},$$ 
 we obtain
\begin{eqnarray*}
& &  \displaystyle\int_{[\underline{v}+\epsilon > n, \overline{v}+\epsilon \leq n]}\Big(-|\nabla \underline{v}|^{p-2}\nabla \underline{v}\nabla\overline{v} \frac{(\overline{v}+\epsilon)^{p-1}p}{n^{p-1}} + |\nabla \overline{v}|^p + \frac{(p-1)n^p|\nabla \overline{v}|^p}{(\overline{v}+\epsilon)^p}\Big)dx \\
& & + \displaystyle\int_{[\overline{v}+\epsilon \leq \underline{v}+\epsilon \leq n]} \Big(|\nabla \underline{v}|^{p} - p\Big(\frac{\overline{v}+\epsilon}{\underline{v}+\epsilon}\Big)^{p-1}|\nabla \underline{v}|^{p-2}\nabla \underline{v}\nabla \overline{v} + (p-1)\Big(\frac{\overline{v}+\epsilon}{\underline{v}+\epsilon}\Big)^{p}|\nabla \underline{v}|^p \Big. \\
& & ~~~~~~~~~~~~+ \Big. |\nabla \overline{v}|^{p} - p\Big(\frac{\underline{v}+\epsilon}{\overline{v}+\epsilon}\Big)^{p-1}|\nabla \overline{v}|^{p-2}\nabla \overline{v}\nabla \underline{v} + (p-1)\Big(\frac{\underline{v}+\epsilon}{\overline{v}+\epsilon}\Big)^{p}|\nabla \overline{v}|^p \Big)dx \\
& = & \displaystyle\int_{\Omega}|\nabla \underline{v}|^{p-2}\nabla \underline{v}\nabla \varphi_1dx - \displaystyle\int_{\Omega}|\nabla \overline{v}|^{p-2}\nabla \overline{v}\nabla \varphi_2dx \\
& \leq &  \displaystyle\int_{\Omega} a\Big[\frac{\underline{v}^{-\delta}}{(\underline{v}_n^{\epsilon})^{p-1}} - \frac{\overline{v}^{-\delta}}{(\overline{v}_n^{\epsilon})^{p-1}}\Big][(\underline{v}_n^{\epsilon})^p - (\overline{v}_n^{\epsilon})^p]^+dx + \displaystyle\int_{\Omega} b\Big[\frac{\underline{v}^{\beta}}{(\underline{v}_n^{\epsilon})^{p-1}} - \frac{\overline{v}^{\beta}}{(\overline{v}_n^{\epsilon})^{p-1}}\Big][(\underline{v}_n^{\epsilon})^p - (\overline{v}_n^{\epsilon})^p]^+dx. 
\end{eqnarray*}

Denoting by
\begin{eqnarray*} 
I& = & \displaystyle\int_{[ \overline{v}+\epsilon \leq \underline{v}+\epsilon \leq n]} \Big(|\nabla \underline{v}|^{p} - p\Big(\frac{\overline{v}+\epsilon}{\underline{v}+\epsilon}\Big)^{p-1}|\nabla \underline{v}|^{p-2}\nabla \underline{v}\nabla \overline{v} + (p-1)\Big(\frac{\overline{v}+\epsilon}{\underline{v}+\epsilon}\Big)^{p}|\nabla \underline{v}|^p  \Big. \\
& & ~~~~~~~~~~ + \Big. |\nabla \overline{v}|^{p} - p\Big(\frac{\underline{v}+\epsilon}{\overline{v}+\epsilon}\Big)^{p-1}|\nabla \overline{v}|^{p-2}\nabla \overline{v}\nabla \underline{v} + (p-1)\Big(\frac{\underline{v}+\epsilon}{\overline{v}+\epsilon}\Big)^{p}|\nabla \overline{v}|^p \Big)dx, \end{eqnarray*}
and using the previous inequality together with the Picone's inequality, we have
\begin{eqnarray}\label{12}
0 \leq  I & \leq & \displaystyle\int_{[\underline{v}+\epsilon > n, \overline{v}+\epsilon \leq n]}|\nabla \underline{v}|^{p-1}|\nabla \overline{v}|dx + \displaystyle\int_{\Omega} a\Big[\frac{\underline{v}^{-\delta}}{(\underline{v}_n^{\epsilon})^{p-1}} - \frac{\overline{v}^{-\delta}}{(\overline{v}_n^{\epsilon})^{p-1}}\Big][(\underline{v}_n^{\epsilon})^p - (\overline{v}_n^{\epsilon})^p]^+dx \nonumber\\
& & + \displaystyle\int_{[\underline{v}+\epsilon > n, \overline{v}+\epsilon \leq n]} b\Big[\frac{\underline{v}^{\beta}}{n^{p-1}} - \frac{\overline{v}^{\beta}}{(\overline{v} + {\epsilon})^{p-1}}\Big][n^p - (\overline{v} + {\epsilon})^p]dx \nonumber\\
& & + \displaystyle\int_{[\overline{v}+\epsilon \leq \underline{v}+\epsilon \leq n]} b\Big[\frac{\underline{v}^{\beta}}{(\underline{v} + \epsilon)^{p-1}} - \frac{\overline{v}^{\beta}}{(\overline{v} + {\epsilon})^{p-1}}\Big][(\underline{v} + {\epsilon})^p - (\overline{v} +{\epsilon})^p]dx. 
\end{eqnarray}

Let us consider each one of the integrals in (\ref{12}). The Dominated Convergence Theorem implies that  
\begin{equation}\label{13}
\displaystyle\int_{[\underline{v}+\epsilon > n, \overline{v}+\epsilon \leq n]}|\nabla \underline{v}|^{p-1}|\nabla \overline{v}|dx \stackrel{n \rightarrow \infty}{\longrightarrow} 0.
\end{equation} 
Also, by manipulating in the second integral, we obtain
\begin{equation}\label{14}
 \displaystyle\int_{\Omega} a\Big[\frac{\underline{v}^{-\delta}}{(\underline{v}_n^{\epsilon})^{p-1}} - \frac{\overline{v}^{-\delta
 }}{(\overline{v}_n^{\epsilon})^{p-1}}\Big][(\underline{v}_n^{\epsilon})^p - (\overline{v}_n^{\epsilon})^p]^+dx \leq 0
\end{equation}
 for all $n \in \mathbb{N}$ and  $\epsilon > 0$. To the second last one, the Dominated Convergence Theorem implies again that
\begin{eqnarray}\label{15}
 & & \displaystyle\int_{[\underline{v}+\epsilon > n, \overline{v}+\epsilon \leq n]} b\Big[\frac{\underline{v}^{\beta}}{n^{p-1}} - \frac{\overline{v}^{\beta}}{(\overline{v} + {\epsilon})^{p-1}}\Big][n^p - (\overline{v} + {\epsilon})^p]dx \nonumber \\
& \leq &  \displaystyle\int_{[\underline{v}+\epsilon > n, \overline{v}+\epsilon \leq n]} b\Big[\underline{v}^{\beta}(\underline{v}+\epsilon) + \overline{v}^{\beta}(\overline{v}+\epsilon)\Big]dx \stackrel{n \rightarrow \infty}{\longrightarrow} 0.
\end{eqnarray}

For the last integral, since
$$ b\Big[\frac{\underline{v}^{\beta}}{(\underline{v} + \epsilon)^{p-1}} - \frac{\overline{v}^{\beta}}{(\overline{v} + {\epsilon})^{p-1}}\Big][(\underline{v} + {\epsilon})^p - (\overline{v} +{\epsilon})^p]^+ \leq \Big[\underline{v}^{\beta}(\underline{v}+\epsilon) + \overline{v}^{\beta}(\overline{v}+\epsilon)\Big] \in L^1(\Omega), $$ 
it follows from Fatou's Lemma that
\begin{eqnarray}\label{16}
& & \displaystyle\limsup_{\epsilon \rightarrow 0}\displaystyle\int_{[\overline{v}+\epsilon \leq \underline{v}+\epsilon \leq n]} b\Big[\frac{\underline{v}^{\beta}}{(\underline{v} + \epsilon)^{p-1}} - \frac{\overline{v}^{\beta}}{(\overline{v} + {\epsilon})^{p-1}}\Big][(\underline{v} + {\epsilon})^p - (\overline{v} +{\epsilon})^p]dx \nonumber\\
&  \leq & \displaystyle\int_{[\overline{v}+\epsilon \leq \underline{v}+\epsilon \leq n]} b\Big[\frac{\underline{v}^{\beta}}{\underline{v}^{p-1}} - \frac{\overline{v}^{\beta}}{\overline{v}^{p-1}}\Big][\underline{v}^p - \overline{v}^p]dx \leq 0, ~~ \mbox{for all} ~ n \in \mathbb{N}.
\end{eqnarray}

Hence, going back to (\ref{12}) and using (\ref{13}), (\ref{14}), (\ref{15}) and (\ref{16}), we get 
\begin{eqnarray*}
0 &\leq & \displaystyle\limsup_{\epsilon \rightarrow 0^+}\displaystyle\liminf_{n \rightarrow \infty} I \leq \displaystyle\limsup_{\epsilon \rightarrow 0^+}\displaystyle\liminf_{n \rightarrow \infty} \Big(\displaystyle\int_{\Omega} a\Big[\frac{\underline{v}^{-\delta}}{(\underline{v}_n^{\epsilon})^{p-1}} - \frac{\overline{v}^{-\delta}}{(\overline{v}_n^{\epsilon})^{p-1}}\Big][(\underline{v}_n^{\epsilon})^p - (\overline{v}_n^{\epsilon})^p]^+dx \Big. \nonumber\\
& & + \displaystyle\int_{[\underline{v}+\epsilon > n, \overline{v}+\epsilon \leq n]} b\Big[\frac{\underline{v}^{\beta}}{n^{p-1}} - \frac{\overline{v}^{\beta}}{(\overline{v} + {\epsilon})^{p-1}}\Big][n^p - (\overline{v} + {\epsilon})^p]dx \nonumber\\
& & \Big.+ \displaystyle\int_{[\overline{v}+\epsilon \leq \underline{v}+\epsilon \leq n]} b\Big[\frac{\underline{v}^{\beta}}{(\underline{v} + \epsilon)^{p-1}} - \frac{\overline{v}^{\beta}}{(\overline{v} + {\epsilon})^{p-1}}\Big][(\underline{v} + {\epsilon})^p - (\overline{v} +{\epsilon})^p]dx \Big).   
\end{eqnarray*}  
 
Since are assuming that $(\underline{v} - \overline{v})^+ \neq 0$ and  $ a + b > 0$ hold, it follows from the previous inequality that
$$ 0  \leq  \displaystyle\limsup_{\epsilon \rightarrow 0^+}\displaystyle\liminf_{n \rightarrow \infty} I < 0, $$ which is an absurd. Therefore  $(\underline{v} - \overline{v})^+ = 0$. This ends the proof.
 \fim
\smallskip
 
\noindent\textbf{\textit{Proof of Theorem 1.1 (Uniqueness):}}\label{unicidade}  In any case,  by the Theorem \ref{unicidade1} we get $u \leq v$ and $v \leq u$, which implies $u =v$.
\fim

\section{$W_{loc}^{1,p}(\Omega)$-continuity and local behavior for a solution application}
 
Throughout this section, we are going to assume the assumptions of Theorem 1.1. So, it is well-defined  the solution application $T:(0,\infty) \to W_{loc}^{1,p}(\Omega)$ given by
 \begin{equation}
 \label{1141}
 T(\alpha) = u_\alpha,
 \end{equation}
 where $u_\alpha \in W_{loc}^{1,p}(\Omega)$ is the unique solution of Problem $(L_\alpha)$ given by Theorem 1.1.
 
 Besides this, it is an immediate consequence of Theorem \ref{unicidade1} the below Proposition.
\begin{proposition}\label{P1}
The application $T$ in non-decreasing.
\end{proposition}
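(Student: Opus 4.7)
The plan is to reduce this monotonicity statement to a single application of the Comparison Principle (Theorem 2.1). Fix $0 < \alpha_1 < \alpha_2$ and set $u_i := T(\alpha_i) = u_{\alpha_i}$ for $i=1,2$. My goal is to exhibit $u_1$ as a subsolution and $u_2$ as a supersolution of one and the same problem (with matching boundary behavior), so that Theorem 2.1 delivers $u_1 \leq u_2$ a.e.\ in $\Omega$.

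The natural choice is to fix $(L_{\alpha_1})$ as the reference problem. On the one hand, $u_1$ solves $(L_{\alpha_1})$ and is therefore trivially a subsolution of it. On the other hand, from the equation satisfied by $u_2$, the positivity $a,b \geq 0$ together with $\alpha_1 < \alpha_2$ yields, for every non-negative $\varphi \in C_c^\infty(\Omega)$,
\begin{equation*}
\int_\Omega |\nabla u_2|^{p-2}\nabla u_2 \nabla \varphi\, dx \;=\; \alpha_2 \int_\Omega \bigl(a(x) u_2^{-\delta} + b(x) u_2^\beta\bigr)\varphi\, dx \;\geq\; \alpha_1 \int_\Omega \bigl(a(x) u_2^{-\delta} + b(x) u_2^\beta\bigr)\varphi\, dx,
\end{equation*}
so $u_2$ is a supersolution of $(L_{\alpha_1})$ in the sense of Definition 2.1 (with the weights $\alpha_1 a$ and $\alpha_1 b$ in the role of $a$ and $b$). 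These rescaled weights clearly obey the integrability hypotheses $(h)_1$, $(h)'_2$, or $(h)_3$ whenever the original ones do, since positive multiplicative constants do not affect $L^q$-membership. Furthermore, $u_1$ satisfies the zero boundary condition in the sense of Definition 1.1 (it is the solution produced by Theorem 1.1), in particular $u_1 \leq 0$ on $\partial\Omega$. All hypotheses of Theorem 2.1 are therefore in place, and it yields $u_1 \leq u_2$ a.e.\ in $\Omega$, which is exactly $T(\alpha_1) \leq T(\alpha_2)$.

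There is no real obstacle beyond bookkeeping: once the comparison principle of Section 2 is in hand, the proposition is essentially its immediate corollary. The only point that deserves a line of care is the marginal case $\delta = 1$ under $(h)_2$, where Theorem 2.1 requires $a \in L^s$ for some $s>1$ rather than merely $a \in L^1$; this is harmless here because Proposition 3.1 is stated under the standing assumptions of Section 3, which inherit the integrability needed for uniqueness and hence for the comparison result to apply.
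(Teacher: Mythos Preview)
Your argument is correct and is exactly the approach the paper takes: the authors state that Proposition~3.1 is ``an immediate consequence of Theorem~\ref{unicidade1}'', and you have simply spelled out the one-line application---$u_{\alpha_1}$ is a subsolution and $u_{\alpha_2}$ a supersolution of $(L_{\alpha_1})$, so the comparison principle gives $u_{\alpha_1}\le u_{\alpha_2}$. Your remark on the $\delta=1$ borderline is also appropriate, since in that case Theorem~1.1 puts both solutions in $W_0^{1,p}(\Omega)$ and the second clause of Theorem~2.1 covers $a\in L^1(\Omega)$.
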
 

Below, let us prove that $T$ is a ``$W_{loc}^{1,p}(\Omega)$-continuous application", i.e. 
\begin{center}
if $\alpha_n \to \alpha$ in $\mathbb{R}$, then $T(\alpha_n) \to T(\alpha)$ in $W^{1,p}(U)$ for each $U\subset\subset \Omega$ given.
\end{center}

To do this, let us begin stating a sub-supersolution result whose proof follows close arguments as done in Theorem 2.4  by Nguyen  and Schmitt in \cite{LOC}.

\begin{theorem}[Sub and Supersolution Theorem]\label{ts} Suppose that $a$ and $b$ satisfy $(\tilde{h}_0)$ and $\underline{u},\overline{u}\in W^{1,p}_{loc} \cap C(\Omega) \cap  L^{\infty}(\Omega)$ are subsolution and supersolution of problem $(L_{\alpha})$, respectively, with  $0 < \underline{u} \leq \overline{u}$ a.e. in $\Omega$. Then there exists a $u \in W^{1,p}_{loc}\cap L^{\infty}(\Omega)$  satisfying the equation in $ (L_{\alpha})$ with $u \in [\underline{u}, \overline{u}]$.
\end{theorem}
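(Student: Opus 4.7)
The plan is to follow a standard truncation–exhaustion scheme, in the spirit of Nguyen–Schmitt \cite{LOC}, with care taken to exploit the uniform positivity of $\underline u$ on compacts (from Definition \ref{D3}) so that the singular term $u^{-\delta}$ is effectively regularized. First I would freeze the nonlinearity between the barriers by defining, for $x\in\Omega$ and $t\in\mathbb{R}$,
$$\tilde f_\alpha(x,t) := \alpha\bigl(a(x)\,T(x,t)^{-\delta} + b(x)\,T(x,t)^{\beta}\bigr),\qquad T(x,t) := \min\{\max\{t,\underline{u}(x)\},\overline{u}(x)\}.$$
Since $\underline u \geq c_K>0$ on every compact $K\subset\subset\Omega$ and $\overline u\in L^\infty(\Omega)$, the Carathéodory function $\tilde f_\alpha(\cdot,u)$ is dominated by $C_K(a(x)+b(x))$ on $K$, hence lies in $L^m_{loc}(\Omega)$ with $m>N/p$ by $(\tilde h_0)$.

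Next, exhaust $\Omega$ by smooth subdomains $\Omega_n\subset\subset\Omega_{n+1}\subset\subset\Omega$ with $\bigcup_n\Omega_n=\Omega$, and on each solve the auxiliary Dirichlet problem
$$-\Delta_p u_n = \tilde f_\alpha(x,u_n)\ \text{in}\ \Omega_n,\qquad u_n = \underline u \ \text{on}\ \partial\Omega_n.$$
Because $\tilde f_\alpha$ is bounded in $L^m(\Omega_n)$ independently of $u_n$, existence of $u_n\in W^{1,p}(\Omega_n)\cap L^\infty(\Omega_n)$ follows by minimizing the coercive, weakly lower-semicontinuous functional $\Phi_n(u)=\tfrac{1}{p}\int_{\Omega_n}|\nabla u|^p - \int_{\Omega_n}\tilde F_\alpha(x,u)$ over $\{u\in W^{1,p}(\Omega_n):u-\underline u\in W^{1,p}_0(\Omega_n)\}$, or via a Schauder fixed-point argument applied to the $-\Delta_p$ solution operator. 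Moreau–Stampacchia/De Giorgi iteration gives the $L^\infty$ bound from $(\tilde h_0)$.

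Third, I would verify $\underline u\leq u_n\leq \overline u$ in $\Omega_n$ by the weak comparison principle for $-\Delta_p$ with bounded right-hand side: testing the sub- and supersolution inequalities against $(\underline u - u_n)^+$ and $(u_n-\overline u)^+\in W_0^{1,p}(\Omega_n)$ respectively, and using the monotonicity of $-\Delta_p$ together with the fact that $\tilde f_\alpha$ is constant in $t$ outside $[\underline u,\overline u]$, closes both inequalities. Once both bounds are in force, $T(x,u_n)=u_n$ and so $u_n$ actually solves the original equation $-\Delta_p u_n = \alpha(au_n^{-\delta}+bu_n^{\beta})$ in $\Omega_n$.

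Finally, passing to the limit: on any fixed compact $K\subset\subset\Omega$ the bounds $\underline u\leq u_n\leq \overline u$ together with a cut-off test function yield a uniform $W^{1,p}(K)$ estimate on $u_n$, whence a subsequence converges weakly in $W^{1,p}_{loc}(\Omega)$ to some $u\in[\underline u,\overline u]$. The Boccardo–Murat result \cite{MUR} then promotes this to $\nabla u_n\to\nabla u$ a.e., letting us identify the limit of $|\nabla u_n|^{p-2}\nabla u_n$; the right-hand side converges by dominated convergence, crucially using the uniform lower bound $u_n\geq c_K>0$ on $K$ to dominate $u_n^{-\delta}$. The main obstacle is precisely this passage to the limit in the singular term $u_n^{-\delta}$: without the local non-degeneracy supplied by $\underline u$, no uniform control on $u^{-\delta}_n$ is available and neither the $W^{1,p}_{loc}$-estimate nor the dominated convergence step goes through. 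Everything else is soft functional analysis grafted onto the bounded-truncation framework of \cite{LOC}.
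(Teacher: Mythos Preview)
Your proposal is correct and follows precisely the route the paper itself indicates: the paper does not give a detailed argument for this theorem but explicitly states that ``the proof follows close arguments as done in Theorem~2.4 by Nguyen and Schmitt in \cite{LOC}'', i.e., the truncation--exhaustion scheme you outline. Your identification of the two technical ingredients specific to the singular setting---the local lower bound $\underline u\ge c_K>0$ from Definition~\ref{D3} to control $u_n^{-\delta}$, and the Boccardo--Murat a.e.\ gradient convergence to pass to the limit in the $p$-Laplacian---is exactly what is needed to adapt \cite{LOC} here.
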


 In what follows, $\Phi_1 \in W_0^{1,p}(\Omega)$ will denote the positive normalized eigenfunction associated to 
\begin{eqnarray}\label{autovalor}
-\Delta_p u = \lambda_1H_1(x){|u|}^{p-2}u   \ \mbox{in} \ \ \Omega, \ \ \Phi_1|_{\partial \Omega} = 0
\end{eqnarray}
 where $H_1(x) := \min\{a(x), b(x)\}\geq 0$ and $\lambda_1 > 0$ will stand for the first eigenvalue of (\ref{autovalor}) (see \cite{AG} and \cite{CUESTA1} for more details about (\ref{autovalor})). If  $(\tilde{h}_0)$ is satisfied, then by 
 \cite{MIGI} one can conclude that \linebreak $\Phi_1 \in C(\overline{\Omega})$. Moreover, if $(\tilde{h}_1)$ holds, then  $\Phi_1$ belongs to the interior of the positive cone in $C^{1}_0(\overline{\Omega})$ (see Corollary 1.1 in \cite{GUEDDA}) and hence we conclude by \cite{VAZ} that for some positive constant, one has
   \begin{eqnarray}\label{4.3}
  Cd(x)\leq \Phi_1(x) \ \  \mbox{in}  \  \Omega ,
   \end{eqnarray}
where $d(x)$ stands for the distance between $x \in \Omega$ and the boundary $\partial\Omega$.

Similarly, defining
$H_2(x) : = \max\{a(x), b(x)\} \geq 0$ and denoting the unique positive solution of \begin{eqnarray}\label{peso}
 -\Delta_p u = H_2(x) \ \ \mbox{in} \  \Omega, \ \ \ u|_{\partial \Omega} = 0
 \end{eqnarray} 
 by $e \in W_0^{1,p}(\Omega)$, it follows from   $(\tilde{h}_0)$ and \cite{MIGI} that $e \in C(\overline{\Omega})$.
 
\begin{lemma}[$T(\alpha)$-behavior for small $\alpha>0$]\label{lema1}
Suppose that  $(\tilde{h}_0)$ is satisfied. Then there exists an $\alpha_0 > 0$ such that $T(\alpha)\in [\underline{u}_{\alpha} ,\overline{u}_{\alpha}]$ for all $\alpha \in (0,\alpha_0)$, where $\underline{u}_{\alpha} := \alpha^q\Phi_1$ and $\overline{u}_{\alpha}:= \alpha^l e^t$ with $q > \frac{1}{p - 1 + \delta}, \  l < \frac{1}{p - 1 + \delta}$ and 
$ t = \frac{p - 1}{p - 1 + \delta}$. In particular, $T(\alpha) \in W_{loc}^{1,p}(\Omega)\cap C(\overline{\Omega})$ for all $\alpha \in (0,\alpha_0)$.
\end{lemma}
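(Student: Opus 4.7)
The plan is to exhibit $\underline{u}_\alpha=\alpha^q\Phi_1$ and $\overline{u}_\alpha=\alpha^l e^t$ as a sub/super-solution pair of $(L_\alpha)$ for $\alpha$ near zero, invoke the Sub-Supersolution Theorem \ref{ts} to produce a solution sandwiched between them, and then identify this solution with $T(\alpha)$ via the uniqueness part of Theorem \ref{T6}.

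For the subsolution check, the $(p-1)$-homogeneity of $-\Delta_p$ together with the eigenvalue equation $-\Delta_p\Phi_1=\lambda_1 H_1\Phi_1^{p-1}$ gives
\[
-\Delta_p(\alpha^q\Phi_1)=\alpha^{q(p-1)}\lambda_1 H_1(x)\Phi_1^{p-1}.
\]
To compare with the singular part $\alpha^{1-q\delta}a(x)\Phi_1^{-\delta}$ of the right-hand side of $(L_\alpha)$ evaluated at $\underline{u}_\alpha$, I would use $H_1\le a$ pointwise together with the positive exponent gap $q(p-1+\delta)-1>0$ coming from the choice of $q$; since $\Phi_1\in C(\overline{\Omega})$ is bounded, the prefactor $\alpha^{q(p-1+\delta)-1}\lambda_1\Phi_1^{p-1+\delta}$ is $\le 1$ throughout $\Omega$ for $\alpha$ sufficiently small, which delivers the subsolution inequality.

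For the supersolution, differentiating and using $-\Delta_p e=H_2$ yields
\[
-\Delta_p(e^t)=t^{p-1}(1-t)(p-1)\,e^{(t-1)(p-1)-1}|\nabla e|^p+t^{p-1}e^{(t-1)(p-1)}H_2(x),
\]
whose first summand is non-negative because $t<1$. The identity $(t-1)(p-1)=-t\delta$, which is precisely how $t=(p-1)/(p-1+\delta)$ is engineered, matches the power of $e$ appearing in $a(x)(\alpha^l e^t)^{-\delta}$. Discarding the non-negative gradient contribution and splitting $H_2\ge\max\{a,b\}$ into two halves, the required inequality reduces to two scalar comparisons whose exponent differences are $l(p-1+\delta)-1<0$ (against the singular term) and $l(p-1-\beta)-1<0$ (against the sublinear term, using $0<\beta<p-1$); both are negative by the choice of $l$, so each factor $\alpha^{l(p-1+\delta)-1}$ and $\alpha^{l(p-1-\beta)-1}$ tends to $+\infty$ as $\alpha\to 0^+$, giving the supersolution inequality once $\alpha<\alpha_0$. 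I expect this to be the most delicate step: the three exponents $q,l,t$ must be wired together consistently, and even though the gradient term is ultimately not needed, some care is required because $a,b$ are only integrable and $e$ vanishes on $\partial\Omega$ (which in fact helps, since $e^{-t\delta-t\beta}$ blows up near $\partial\Omega$ and makes the boundary estimate easier, not harder).

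To conclude, observe that $\underline{u}_\alpha\in W_0^{1,p}(\Omega)$ fulfils the zero boundary condition of Definition \ref{fronteira}; hence the Comparison Principle Theorem \ref{unicidade1} immediately forces $\underline{u}_\alpha\le\overline{u}_\alpha$ a.e.\ in $\Omega$, freeing us from any direct pointwise comparison between $\Phi_1$ and $e^t$. Theorem \ref{ts} then produces $u\in[\underline{u}_\alpha,\overline{u}_\alpha]\cap W^{1,p}_{loc}(\Omega)\cap L^\infty(\Omega)$ solving $(L_\alpha)$, and the uniqueness assertion in Theorem \ref{T6} identifies $u$ with $T(\alpha)$, establishing the order-interval inclusion. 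Finally, $\overline{u}_\alpha=\alpha^l e^t\in C(\overline{\Omega})$ with zero boundary values, so the sandwich $0\le T(\alpha)\le\overline{u}_\alpha$ together with the interior $C^{1,\gamma}$-regularity available for bounded solutions of $p$-Laplacian-type equations yields $T(\alpha)\in W^{1,p}_{loc}(\Omega)\cap C(\overline{\Omega})$, as asserted.
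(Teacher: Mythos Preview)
Your argument follows the same overall architecture as the paper's proof: verify that $\alpha^q\Phi_1$ and $\alpha^l e^t$ are an ordered sub/super-solution pair for $\alpha$ small, apply Theorem~\ref{ts}, and then identify the resulting solution with $T(\alpha)$ by uniqueness. The one substantive difference is the ordering step: the paper obtains $\underline{u}_\alpha\le\overline{u}_\alpha$ by a direct elementary comparison---it first shows $\epsilon\Phi_1\le e$ for small $\epsilon>0$ (testing the equations against each other) and then exploits $l<q$ to absorb the discrepancy of powers for small $\alpha$---whereas you invoke the $W^{1,p}_{loc}$-Comparison Principle (Theorem~\ref{unicidade1}) directly. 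Your shortcut is legitimate, since $(\tilde h_0)$ implies the integrability hypotheses of Theorem~\ref{unicidade1} for the rescaled coefficients $\alpha a,\alpha b$, and it spares you the pointwise estimate; the paper's route, on the other hand, is self-contained and does not call back to the heavier comparison result.

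Two small caveats are worth flagging. First, under $(\tilde h_0)$ alone the torsion function $e$ need not be $C^1$, so your displayed pointwise identity for $-\Delta_p(e^t)$ should be read in the weak sense (this is exactly what the paper does in \eqref{1181}, using $\varphi(\alpha^l t e^{t-1})^{p-1}$ as a test function in the equation for $e$). Second, interior $C^{1,\gamma}$ regularity typically requires the right-hand side in $L^\infty$ (or $L^m$ with $m>N$), while $(\tilde h_0)$ only gives $m>N/p$; the paper instead appeals to the potential estimates of \cite{MIGI} to obtain interior continuity, which together with the sandwich $0\le T(\alpha)\le \alpha^l e^t$ at $\partial\Omega$ suffices for $T(\alpha)\in C(\overline{\Omega})$. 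Neither point affects the correctness of your conclusion.
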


\noindent\begin{proof}
Let $\alpha>0$. Since $q > \frac{1}{ p - 1 + \delta}$ hold, we are able 
 to find $\alpha_1 > 0$ such that  $\sup_{\overline{\Omega}} \Phi_1^{p - 1 +\delta} \leq \lambda_1^{-1}\alpha^{1 - q(p-1 + \delta)}$ for all $\alpha \in ( 0, \alpha_1)$. Thus, 
$$ -\Delta_p \underline{u}_{\alpha} \leq \lambda_1\alpha^{q(p - 1)}H
_1
(x) \displaystyle\sup_{\overline{\Omega}}\Phi_1^{p-1} \leq \frac{\alpha^{1 - q\delta}}{\displaystyle\sup_{\overline{\Omega}} \Phi_1^{\delta}}a(x) = \alpha\frac{a(x)}{\underline{u}_{\alpha}^{\delta}} \leq \alpha \Big( \frac{a(x)}{\underline{u}_{\alpha}^{\delta}} + b(x)\underline{u}_{\alpha}^{\beta} \Big) $$
holds true.

To the supersolution,  define $\overline{u}_{\alpha} =  \alpha^le^t$, where $ t= \frac{p-1}{p-1+\delta} $ and $l < \frac{1}{p - 1 + \delta}$. So, we obtain
\begin{eqnarray}\label{1181}
\!\!\!\! \!\!\!\! \displaystyle\int_{\Omega}
 |\nabla\overline{u}_{\alpha}|^{p-2}\nabla\overline{u}_{\alpha} \nabla\varphi dx
\stackrel{0<t<1}{\geq} \displaystyle\int_{\Omega} |\nabla e|^{p-2}\nabla e \nabla \Big[\varphi(\alpha^le^{t-1}t)^{p-1}\Big]dx  = \displaystyle\int_{\Omega} H_2(x)\Big[\varphi(\alpha^le^{t-1}t)^{p-1}\Big]dx
\end{eqnarray}
for all  $ \varphi \geq 0 \ \ \mbox{in}  \ C_c^{\infty}(\Omega)$.
 
Besides this, by using that $l < \frac{1}{p - 1 + \delta}$,  we can choose $\alpha_2 > 0$ such that
$$ t^{p-1}\alpha^{l(p - 1 + \delta) - 1} \geq 1 + \alpha^{l(\beta + \delta)}\displaystyle\sup_{\overline{\Omega}}e^{t(\beta + \alpha)}$$ 
 for all $\alpha \in (0, \alpha_2)$. So by using this inequality in (\ref{1181}), we get $\overline{u}$ is a supersolution for $(L_{\alpha}). $ 

Moreover, by taking $\epsilon > 0$ such that $\epsilon^{p-1}\lambda_1\displaystyle\sup_{\overline{\Omega}}\Phi_1^{p-1} < 1$, we have that $\epsilon \Phi_1 \leq e$. Since $l<q$, there exists $\alpha_3 > 0$ such that  $ \displaystyle\sup_{\overline{\Omega}} \Phi_1^{1-t} \leq \epsilon^t \alpha^{l - q}$ for all $\alpha \in (0, \alpha_3)$. So by taking $\alpha_0 = \min \{\alpha_1,\alpha_2,\alpha_3\}$, we obtain from above informations that 
$ \underline{u}_{\alpha} \leq \overline{u}_{\alpha}.$

 Therefore, it follows from  Theorem \ref{ts} that there exists a  function $u  \in W^{1,p}_{loc} \cap L^{\infty}(\Omega)$ satisfying the equation in  $(L_{\alpha})$ with $u \in [\underline{u}_{\alpha}, \overline{u}_{\alpha}]$ for all $\alpha>0$ small enough.  Since, $ \underline{u}_{\alpha},\overline{u}_{\alpha} \in W^{1,p}_{loc}(\Omega)  \cap C(\overline{\Omega})$, we obtain that $u$  satisfies the boundary condition given in Definition \ref{fronteira}.  So, by the uniqueness claimed in Theorem 1.1,  we conclude that  $u = u_{\alpha}=T(\alpha)$. 
 
Finally, it follows from the hypothesis   $(\tilde{h}_0)$,  the fact that $T(\alpha)\in [\underline{u}_{\alpha} ,\overline{u}_{\alpha}]$ and  Corollary 8.1 in  \cite{MIGI} that  $u_{\alpha} \in C(\Omega)$ for $\alpha>0$ small. As $\underline{u}_{\alpha}$ and $\overline{u}_{\alpha} \in C(\overline{\Omega})$ and  $\underline{u}_{\alpha}\vert_{\partial \Omega} = \overline{u}_{\alpha} \vert_{\partial \Omega} = 0, $ the required regularity follows.
\fim
\end{proof}
\vspace{0.2cm}

Following close arguments as done above, we prove the next Lemma.
\begin{lemma}[$T(\alpha)$-behavior for large $\alpha>0$]
\label{lema2} Assume that  $(\tilde{h}_0)$ is satisfied. Then, there exists $\alpha_{\infty} > 0$ such that $T(\alpha)\in [\underline{u}_{\alpha} ,\overline{u}_{\alpha}]$ for all $\alpha \in (0,\alpha_\infty)$, where $\underline{u}_{\alpha} := \alpha^q\Phi_1$ and $\overline{u}_{\alpha}:= \alpha^l e^t$ with $ q < \frac{1}{p-1 - \beta}, \ l > \frac{1}{p -1 - \beta} $ and $ t = \frac{p-1}{p-1+\delta} $.  In particular, $T(\alpha) \in W_{loc}^{1,p}(\Omega)\cap C(\overline{\Omega})$ for all $\alpha \in (\alpha_\infty, \infty)$.
\end{lemma}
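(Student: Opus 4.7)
The plan is to mimic the strategy of Lemma 3.2 verbatim, but to reverse every exponent inequality so that the same algebraic identities now push in the ``large~$\alpha$'' direction instead of the ``small~$\alpha$'' direction. In particular, I will verify separately that $\underline{u}_\alpha=\alpha^q\Phi_1$ is a subsolution of $(L_\alpha)$ for $\alpha$ large, that $\overline{u}_\alpha=\alpha^l e^t$ is a supersolution for $\alpha$ large, and that $\underline{u}_\alpha\le\overline{u}_\alpha$ for $\alpha$ large; then invoke Theorem~3.1 to get a solution sandwiched between them and Theorem~1.1 to identify it with $T(\alpha)$.

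For the subsolution, I compute $-\Delta_p\underline{u}_\alpha=\alpha^{q(p-1)}\lambda_1 H_1(x)\Phi_1^{p-1}$, and on the right I keep only the sublinear term: $\alpha(a\underline{u}_\alpha^{-\delta}+b\underline{u}_\alpha^\beta)\ge \alpha^{1+q\beta}b(x)\Phi_1^\beta\ge\alpha^{1+q\beta}H_1(x)\Phi_1^\beta$. The required pointwise inequality reduces to $\lambda_1\sup_{\overline\Omega}\Phi_1^{p-1-\beta}\le \alpha^{1-q(p-1-\beta)}$, and since $q<(p-1-\beta)^{-1}$ the exponent $1-q(p-1-\beta)$ is \emph{positive}, so the inequality holds for all $\alpha\ge\alpha_1$ with $\alpha_1$ sufficiently large. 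For the supersolution, I repeat the chain-rule computation from Lemma~3.2: since $0<t<1$, the term coming from $(t-1)(p-1)<0$ has the right sign, and testing against $\varphi(\alpha^l t e^{t-1})^{p-1}$ gives
\[
\int_\Omega |\nabla\overline{u}_\alpha|^{p-2}\nabla\overline{u}_\alpha\nabla\varphi\,dx \;\ge\; \alpha^{l(p-1)}t^{p-1}\int_\Omega H_2(x)\,e^{(t-1)(p-1)}\varphi\,dx.
\]
Using the key identity $(t-1)(p-1)=-t\delta$ together with $H_2\ge a,b$, the supersolution inequality collapses to the two scalar bounds $\alpha^{l(p-1+\delta)-1}t^{p-1}\ge 2$ and $\alpha^{l(p-1-\beta)-1}t^{p-1}\ge 2\sup_{\overline\Omega}e^{t(\beta+\delta)}$; the condition $l>(p-1-\beta)^{-1}>(p-1+\delta)^{-1}$ makes both exponents positive, so both hold for $\alpha\ge\alpha_2$.

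To obtain the ordering $\underline{u}_\alpha\le\overline{u}_\alpha$, I borrow the step from Lemma~3.2: choose $\epsilon>0$ with $\epsilon^{p-1}\lambda_1\sup\Phi_1^{p-1}<1$ so that weak comparison yields $\epsilon\Phi_1\le e$, hence $e^t\ge\epsilon^t\Phi_1^t$. The desired inequality then reduces to $\sup_{\overline\Omega}\Phi_1^{1-t}\le \epsilon^t\alpha^{l-q}$; since now $l>q$, the right-hand side tends to $+\infty$ as $\alpha\to\infty$, so the bound holds for $\alpha\ge\alpha_3$. Setting $\alpha_\infty:=\max\{\alpha_1,\alpha_2,\alpha_3\}$, Theorem~3.1 produces $u\in W_{loc}^{1,p}(\Omega)\cap L^\infty(\Omega)$ solving the PDE in $(L_\alpha)$ with $\underline{u}_\alpha\le u\le\overline{u}_\alpha$ for every $\alpha>\alpha_\infty$.

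Finally, since $\underline{u}_\alpha,\overline{u}_\alpha\in C(\overline\Omega)$ with zero boundary values, the squeeze forces the boundary condition of Definition~1.1 on $u$, and Corollary~8.1 of \cite{MIGI} (applicable under $(\tilde h_0)$) together with the $L^\infty$-bound upgrades $u$ to $C(\overline\Omega)$. The uniqueness clause of Theorem~1.1 then identifies $u=u_\alpha=T(\alpha)$, giving $T(\alpha)\in W_{loc}^{1,p}(\Omega)\cap C(\overline\Omega)$ for all $\alpha\in(\alpha_\infty,\infty)$. The only place where care is needed is the chain-rule computation for the supersolution, where one must track the sign of $(t-1)(p-1)$ and exploit the identity with $-t\delta$ so that the singular term $a\overline{u}_\alpha^{-\delta}$ is absorbed by the ``$H_2 e^{(t-1)(p-1)}$'' piece rather than creating a forbidden boundary blow-up; every other ingredient is a direct exponent comparison.
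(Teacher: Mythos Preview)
Your proof is correct and follows precisely the route the paper intends: the paper's own proof is the single line ``Following close arguments as done above, we prove the next Lemma,'' and you have carried out exactly those close arguments --- switching from the singular term to the sublinear term for the subsolution, reversing every exponent inequality so the scalar conditions hold for large $\alpha$ rather than small $\alpha$, and reusing the same chain-rule/sign computation and the $\epsilon\Phi_1\le e$ comparison. Your splitting of the supersolution condition into two separate scalar bounds (with constant $2$) is a harmless variant of the paper's single combined inequality in Lemma~3.2; both reduce to the same constraint $l>\frac{1}{p-1-\beta}$.
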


After the above Lemmas and Proposition 3.1, we obtain that
$$T((0,\infty)) \subset  W_{loc}^{1,p}(\Omega)\cap C(\overline{\Omega})$$
when $(\tilde{h}_0)$  holds. Now, we are able to prove the continuity of $T$.
\begin{lemma}
\label{CONT} Suppose $(\tilde{h}_0)$  holds. Then $T$ is a continuous application both in the $ W^{1,p}_{loc}(\Omega)$ topology and  $ C(\overline{\Omega})$ one.
\end{lemma}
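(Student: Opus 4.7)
The plan is to exploit three ingredients already developed: the monotonicity of $T$ (Proposition 3.1), the two-sided a.e.\ bounds in Lemmas \ref{lema1} and \ref{lema2} (extended to any compact $\alpha$-interval in $(0,\infty)$ by monotonicity), and the uniqueness given by Theorem 1.1. Fix $\alpha_n\to\alpha\in(0,\infty)$ and write $u_n:=T(\alpha_n)$, $u:=T(\alpha)$. First I would observe that, eventually, $\alpha_n\in[\alpha/2,2\alpha]$, so monotonicity of $T$ together with Lemmas \ref{lema1}--\ref{lema2} produces functions $\underline{w},\overline{w}\in W^{1,p}_{loc}(\Omega)\cap C(\overline{\Omega})$, independent of $n$, with $0<\underline{w}\le u_n\le\overline{w}$ a.e.\ in $\Omega$ and $\overline{w}\in L^\infty(\Omega)$. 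In particular, on every $K\subset\subset\Omega$ there exist constants $0<c_K\le M_K$ such that $c_K\le u_n\le M_K$ uniformly in $n$.

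Next I would derive uniform local $W^{1,p}$-estimates. Pick $\eta\in C_c^\infty(\Omega)$ with $0\le\eta\le1$, $\eta\equiv1$ on a given $U\subset\subset\Omega$ and $\mathrm{supp}\,\eta=:K\subset\subset\Omega$. Using $\varphi=\eta^p (u_n-\underline{w})$ (which is a legitimate test function since it belongs to $W^{1,p}_0(\Omega)\cap L^\infty_c(\Omega)$ and $u_n-\underline{w}\ge 0$ a.e.) in the equation satisfied by $u_n$ and in the sub-inequality for $\underline{w}$, a standard manipulation together with the uniform bound $c_K\le u_n\le M_K$ on $K$ yields
\begin{equation*}
\int_U|\nabla u_n|^p\,dx\le C\Big(\alpha,\underline{w},\overline{w},U,K,\|a\|_m,\|b\|_m\Big),
\end{equation*}
uniformly in $n$, since the right-hand side $\alpha_n(a\,u_n^{-\delta}+b\,u_n^{\beta})$ is uniformly bounded in $L^m(K)$ by hypothesis $(\tilde h_0)$. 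Thus $(u_n)$ is bounded in $W^{1,p}(U)$ for every $U\subset\subset\Omega$, and by diagonal extraction we may pass to a subsequence (not relabelled) with $u_n\rightharpoonup u^\ast$ in $W^{1,p}_{loc}(\Omega)$, $u_n\to u^\ast$ in $L^p_{loc}(\Omega)$ and a.e.\ in $\Omega$.

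The crux is passing to the limit in the $p$-Laplacian term; this is where I expect the work. Because $c_K\le u_n\le M_K$ on compacts, on each $K$ the right-hand sides $f_n:=\alpha_n(a\,u_n^{-\delta}+b\,u_n^{\beta})$ lie in $L^m(K)$ with $m>N/p$, and by dominated convergence $f_n\to f:=\alpha(a\,(u^\ast)^{-\delta}+b\,(u^\ast)^{\beta})$ strongly in $L^1(K)$. An application of the Boccardo--Murat a.e.\ convergence of gradients for renormalised solutions of $-\Delta_p v_n=f_n$ (the result cited from \cite{MUR}) gives $\nabla u_n\to\nabla u^\ast$ a.e.\ on every $K$, and combined with the uniform $W^{1,p}(K)$ bound and Vitali yields $\nabla u_n\to\nabla u^\ast$ strongly in $L^p(K)$. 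Passing to the limit in the weak formulation, $u^\ast$ satisfies equation (\ref{sol}) with parameter $\alpha$ and $r=0$, and from $\underline{w}\le u^\ast\le\overline{w}$ it satisfies the boundary condition of Definition \ref{fronteira}. By the uniqueness part of Theorem 1.1, $u^\ast=u=T(\alpha)$, and the usual subsequence argument lifts convergence to the whole sequence, establishing $W^{1,p}_{loc}$-continuity.

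For the $C(\overline{\Omega})$-continuity, I would invoke the global $C^{0,\sigma}(\overline{\Omega})$-estimate for the $p$-Laplacian with $L^m$ right-hand side ($m>N/p$) found in \cite{MIGI}: since the right-hand sides $f_n$ are uniformly bounded in $L^m(\Omega)$ (using $\underline{w}\ge c>0$ on each compact and the integrability of $a\,\underline{w}^{-\delta}$, $b\,\overline{w}^{\beta}$ guaranteed by hypothesis $(\tilde h_0)$ and Lemma \ref{lema1}), the family $\{u_n\}$ is equicontinuous on $\overline{\Omega}$ and uniformly bounded, so Arzel\`a--Ascoli gives a subsequence converging in $C(\overline{\Omega})$. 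The limit must coincide a.e.\ with $u$, so by continuity is equal to $u$ on $\overline{\Omega}$, and again the subsequence trick promotes this to convergence of the whole sequence. The single delicate point throughout is the interplay between the singular term $u_n^{-\delta}$ and the boundary behaviour, but the uniform lower bound $u_n\ge\underline{w}$ confines the singularity to the boundary in a controlled way and reduces the analysis on the interior to standard quasilinear theory.
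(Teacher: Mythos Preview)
Your argument for the $W^{1,p}_{loc}$-continuity is essentially the same as the paper's: uniform two-sided barriers from Lemmas~\ref{lema1}--\ref{lema2} and monotonicity, a Caccioppoli-type local energy bound via a cutoff test function, Boccardo--Murat to get a.e.\ gradient convergence, and then identification of the limit by the uniqueness of Theorem~\ref{T6}. (The paper uses $\xi^p u_{\alpha_n}$ rather than $\eta^p(u_n-\underline w)$ as test function, but this is cosmetic.) One minor overreach: from a uniform $W^{1,p}(K)$ bound and a.e.\ convergence of $\nabla u_n$ you cannot invoke Vitali to get strong $L^p(K)$ convergence without first establishing uniform integrability of $|\nabla u_n|^p$; the paper avoids this by passing to the limit using only the $L^q$ convergence for $q<p$ that \cite{MUR} gives directly, which already suffices to identify the limiting equation.

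The genuine gap is in the $C(\overline{\Omega})$-continuity. You claim that the right-hand sides $f_n=\alpha_n\big(a\,u_n^{-\delta}+b\,u_n^{\beta}\big)$ are uniformly bounded in $L^m(\Omega)$ and then invoke a global $C^{0,\sigma}(\overline{\Omega})$ estimate. But $u_n^{-\delta}\le \underline w^{-\delta}=\big(\underline{\alpha}^q\Phi_1\big)^{-\delta}$, and under $(\tilde h_0)$ you only know $\Phi_1\in C(\overline{\Omega})$ with $\Phi_1=0$ on $\partial\Omega$; there is no reason for $a\,\Phi_1^{-\delta}$ to lie in $L^m(\Omega)$ (for large $\delta$ it will not even be in $L^1$). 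So the global regularity step fails as written. The paper circumvents this by working only in the interior: from the local $L^m(K)$ bound one gets uniform $C^{0,\alpha}(K)$ estimates on each compact $K\subset\Omega$, hence by Arzel\`a--Ascoli $u_{\alpha_n}\to u$ in $C(\Omega)$, and the upgrade to $C(\overline{\Omega})$ comes for free from the sandwich $\underline{\alpha}^q\Phi_1\le u_{\alpha_n}\le \overline{\alpha}^{l}e^t$ with both barriers in $C(\overline{\Omega})$ and vanishing on $\partial\Omega$. Replacing your global $L^m$ claim by this interior-plus-squeeze argument closes the gap.
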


\begin{proof} First let us prove the continuity of $T$ in the $ W^{1,p}_{loc}(\Omega)$-topology. Consider $ \alpha_n \rightarrow \alpha > 0$ in $\mathbb{R}$. Then, it follows from  Lemmas \ref{lema1},  \ref{lema2} and monotonicity established in the Proposition  \ref{P1}, that  there exist  $0<\underline{\alpha} < \alpha_0 $ and  $\overline{\alpha} > \alpha_{\infty} $ such that  
\begin{eqnarray}\label{1211}
 \underline{\alpha}^q \Phi_1 = u_{\underline{\alpha} }\le u_{\alpha_n}\le u_{\overline{\alpha}} = \overline{\alpha}^le^t ~~ \mbox{ in } ~~ \Omega, ~ \mbox{for all } n \in \mathbb{N}. 
 \end{eqnarray}

 Take an open set $U \subset \subset \Omega$ and $\xi \in C_c^{\infty}(\Omega)$ such that  $  0 \leq \xi \leq 1$ and $\xi = 1$ in $U$. By using $u_{\alpha_n}\xi^p$ as a test functions in $(L_{\alpha_n})$, we obtain
\begin{eqnarray}\label{11}
\displaystyle \int_{\Omega} |\nabla u_{\alpha_n}|^p \xi^pdx + p\displaystyle\int_{\Omega} |\nabla u_{\alpha_n}|^{p-2}\nabla u_{\alpha_n} \nabla \xi u_{\alpha_n} \xi^{p-1}dx = \alpha_n \displaystyle\int_{\Omega} \Big[ a(x)u_{\alpha_n}^{-\delta+1} + b(x)u_{\alpha_n}^{\beta + 1} \Big]\xi^pdx. 
\end{eqnarray} 

So, it follows from  the boundedness of $(u_n)$ in $L^{\infty}(\Omega)$ and Young's inequality that
\begin{eqnarray}\label{21}
\displaystyle\int_{\Omega} |\nabla u_{\alpha_n}|^{p-2} \nabla u_{\alpha_n} \nabla \xi u_{\alpha_n}\xi^{p-1}dx & \leq & \displaystyle\int_{\Omega} |\nabla u_{\alpha_n}|^{p-1} |\nabla \xi| u_{\alpha_n}\xi^{p-1}dx  \nonumber \\
& \leq & \epsilon \displaystyle\int_{\Omega} \left(|\nabla u_{\alpha_n}|^{p-1}\xi^{p-1}\right)^{\frac{p}{p-1}}dx + C(\epsilon) \displaystyle\int_{\Omega} u_{\alpha_n}^p |\nabla \xi|^p dx \nonumber \\
& \leq & \epsilon \displaystyle\int_{\Omega} \xi^p |\nabla u_{\alpha_n}|^pdx + C(\epsilon),  
\end{eqnarray}
where $C(\epsilon)$ is a cumulative positive constant.

Hence, by using (\ref{1211}) and (\ref{21}) in (\ref{11}) , we obtain 
$$  \displaystyle\int_{U} |\nabla u_{\alpha_n}|^p dx \leq \displaystyle\int_{\Omega} |\nabla u_{\alpha_n}|^p \xi^pdx \leq  C(\epsilon),$$
which implies that $(u_{\alpha_n})$ is bounded in $W_{{loc}}^{1,p}(\Omega)$. So, there exists $u \in W_{loc}^{1,p}(\Omega)$ such that
\begin{equation}
\label{121}
 \left\{
\begin{array}{l}
 u_{\alpha_n} \rightharpoonup  u \ \ \mbox{in}  \  \ W^{1,p}(U) \\
 u_{\alpha_n} \rightarrow u \ \ \ \mbox{in}  \ \ L^{q}(U) \  \mbox{ for all} \ \  1 \leq q < p^* \\
u_{\alpha_n}(x) \rightarrow u(x) \ \ \mbox{almost everywhere in} \ \Omega ,

\end{array}
\right.
\end{equation}
for each  $U \subset \subset \Omega$ given.

By applying \cite{MUR} (see Theorem 2.1), we obtain $$ \nabla u_{\alpha_n} \rightarrow \nabla u, \   \mbox{ in} \  (L^q(\Omega))^N \  \mbox{for any } \  q < p.$$ 
As a consequence, for each $\varphi \in C_c^{\infty}(\Omega)$ given, we get
$$ \displaystyle\int_{\Omega}( |\nabla u_{\alpha_n}|^{p-2}\nabla u_{\alpha_n} - |\nabla u(x)| )^{p-2}\nabla u) \nabla \varphi dx
\rightarrow 0. $$
Moreover,  if $K$  denote the
support of  $\varphi$, we have
$$ \Big|\Big( \frac{a}{u_{\alpha_n}^{\delta}} + bu_{\alpha_n}^{\beta}\Big)\varphi \Big| \leq \Big(\frac{a}{(\underline{\alpha}^q\displaystyle\inf_{K}\Phi_1)^{\delta}} + b \overline{\alpha}^{l\beta}\displaystyle\sup_{K}e^{t\beta}\Big)\|\varphi\|_{\infty} \in L^1(K), $$
thus using the Dominated Convergence Theorem, we get
 $$ \alpha_n \displaystyle\int_{\Omega} \Big( \frac{a}{u_{\alpha_n}^{\delta}} + bu_{\alpha_n}^{\beta}\Big)\varphi dx \longrightarrow \alpha \displaystyle\int_{\Omega} \Big( \frac{a}{u^{\delta}} + bu^{\beta}\Big)\varphi dx. $$  
Hence, we conclude that
$$\displaystyle\int_{\Omega} |\nabla u|^{p-2}\nabla u \nabla \varphi = \alpha \ \displaystyle\int_{\Omega} \Big( \frac{a}{u^{\delta}} + bu^{\beta}\Big)\varphi , \ \ \ \forall \ \varphi \in C_c^{\infty}(\Omega). $$

Since $\underline{\alpha}^q\Phi_1 \leq u_{\alpha_n} \leq \overline{\alpha}^{l}e^t$, we have $\underline{\alpha}^q\Phi_1 \leq u \leq \overline{\alpha}^{l}e^t$. Thus, as $\Phi_1$ and $e \in C(\overline{\Omega})$, we have
$0 \leq (u-\epsilon)^+ \leq (\overline{\alpha}^{l}e^t - \epsilon)^+,$
that is,  $(u-\epsilon)^+ \in W_0^{1,p}(\Omega)$ for each $\epsilon>0$ given, that is $u$ satisfies the boundary condition of Definition \ref{fronteira}.  So, by applying the  uniqueness of $W_{loc}^{1,p}(\Omega)$-solutions to Problem $(L_{\alpha})$ claimed in Theorem \ref{T6}, we have that $u = u_{\alpha}$.

For the $C(\overline{\Omega})$-continuity,  it follows from  (\ref{1211}) that the sequence $(u_{\alpha_n}) $ is  bounded in $C^{\alpha}(K)$ for some $\alpha \in (0,1)$ and  each compact $K \subset \Omega$ given. So, it follows from 
Arzel$\grave{a}$-Ascoli's Theorem and (\ref{121}), that $u_{\alpha_n} \to u$ in $C(\Omega)$. Besides this, by using (\ref{1211}), we obtain that $u \in C(\overline{\Omega})$  and  $u_{\alpha_n} \rightarrow u$ in $C(\overline{\Omega})$.
\fim
\end{proof}

\section{Multiplicity of $W^{1,p}_{loc}(\Omega)$-solutions for $(P_{\lambda})$}
Now we are able to prove Theorem 1.2. Before that, we will introduce the  applications $G:D(G)\subset W^{1,p}_{loc}(\Omega) \to [0,\infty)$ and $H:(0,\infty) \to (0,\infty)$ defined by
\begin{equation}\label{114} G(u) = \Big(\displaystyle\int_{\Omega} g(x,u)dx\Big)^r ~~~ \mbox{and} ~~~~ H(\alpha) = \alpha G(T(\alpha)),
\end{equation} 
where $D(G) = \{0\leq u \in W^{1,p}_{loc}(\Omega)~/~ G(u)<\infty\}$. 

Besides these, let us consider the system 
\begin{equation}\label{arcoya21}
\left\{\begin{array}{l}
\displaystyle\int_{\Omega} |\nabla  
u|^{p-2}\nabla u\nabla \varphi dx = {\alpha} \displaystyle\int_{\Omega} \Big(a(x)u^{-\delta} + b(x)u^{\beta}\Big)\varphi dx \\
\alpha G(u)= \lambda ,
\end{array}\right.
\end{equation}
remind that
$$\Sigma = \{(\lambda, u) \in (0,\infty)\times C(\overline{\Omega})~/~u \in W^{1,p}_{loc}(\Omega)~\mbox{is solution of}~ (P_{\lambda})\}$$
and denote by
$$\Sigma' = \{(H(\alpha), u_{\alpha}) \in (0,\infty)\times C(\overline{\Omega})~/ ~\alpha \in (0, \infty)~\mbox{and }u_\alpha\in W_{loc}^{1,p}(\Omega)~\mbox{is a solution of }(L_\alpha)\}.$$

As a consequence of Lemma 3.3, we can prove the next result. 

\begin{lemma}\label{lema4.2} Suppose one of the following item holds:
\begin{itemize}
\item[$(i)$] $(\tilde{h}_0)$ is satisfied and $g \in C(\overline{\Omega}\times[0,\infty), (0, \infty))$;
\item[$(ii)$] $(\tilde{h}_1)$, $g \in C(\overline{\Omega}\times(0,\infty), (0, \infty))$ and  $\displaystyle\lim_{t \rightarrow 0^+}g(x, t)t^{\theta} = f(x) \geq 0$ uniformly in $\overline{\Omega}$, for some $f \in C(\overline{\Omega})$ and $0 < \theta < 1$. 
\end{itemize}
Then $T\big((0,\infty) \big) \subset D(G)$ and, in particular, $H$  is well-defined. Besides this, $H$  is a continuous function.
\end{lemma}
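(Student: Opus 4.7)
The plan is to prove two things: (a) $G(T(\alpha))<\infty$ for every $\alpha>0$, so that $H(\alpha)=\alpha G(T(\alpha))$ is defined; and (b) $H$ is continuous on $(0,\infty)$. Throughout, I will use that by Lemmas \ref{lema1}, \ref{lema2}, \ref{CONT} and Proposition \ref{P1}, $T$ maps $(0,\infty)$ into $W^{1,p}_{loc}(\Omega)\cap C(\overline{\Omega})$, is non-decreasing, and is continuous in both the $W^{1,p}_{loc}(\Omega)$- and $C(\overline{\Omega})$-topologies. Moreover, given $\alpha_n\to\alpha>0$, one can pick $0<\underline{\alpha}<\alpha<\overline{\alpha}$ with $\alpha_n\in[\underline{\alpha},\overline{\alpha}]$ eventually, so that by monotonicity
\[
\underline{\alpha}^{q}\Phi_1 \;\leq\; u_{\alpha_n}\;\leq\;\overline{\alpha}^{\,l}\, e^{\,t} \quad\text{in }\Omega.
\]

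In case $(i)$ the situation is transparent. Each $u_\alpha\in C(\overline{\Omega})$ takes values in some bounded interval $[0,M_\alpha]$, so $g(\cdot,u_\alpha)$ is bounded on $\overline{\Omega}$ by the continuity of $g$ on the compact set $\overline{\Omega}\times[0,M_\alpha]$, which gives $G(u_\alpha)<\infty$. For continuity of $H$, the sandwich above lets me work on a common compact range, and the uniform convergence $u_{\alpha_n}\to u_\alpha$ on $\overline{\Omega}$ from Lemma \ref{CONT}, combined with uniform continuity of $g$ on $\overline{\Omega}\times[0,M]$, yields $g(\cdot,u_{\alpha_n})\to g(\cdot,u_\alpha)$ uniformly on $\overline{\Omega}$. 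Integration and raising to the power $r$ preserve the limit (since $\int_\Omega g(\cdot,u)>0$), and multiplying by $\alpha_n$ gives $H(\alpha_n)\to H(\alpha)$.

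Case $(ii)$ is the delicate part. The crucial new input is the inequality $\Phi_1(x)\geq C d(x)$ valid under $(\tilde{h}_1)$ (see (\ref{4.3})), which combined with Lemma \ref{lema1} yields $u_\alpha(x)\geq C_\alpha d(x)$ in $\Omega$. The hypothesis $g(x,t)t^\theta\to f(x)$ uniformly in $\overline{\Omega}$ provides $\eta>0$ with $g(x,t)\leq Ct^{-\theta}$ for all $x\in\overline{\Omega}$ and $0<t\leq\eta$, while $g$ is uniformly bounded on $\overline{\Omega}\times[\eta,M]$ for each $M>0$. Combining these,
\[
g(x,u_\alpha(x)) \;\leq\; C\,(u_\alpha(x))^{-\theta} + C' \;\leq\; C''\,d(x)^{-\theta}+C',
\]
which belongs to $L^1(\Omega)$ because $0<\theta<1$ and $\Omega$ is smooth and bounded; hence $G(u_\alpha)<\infty$. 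For continuity, the uniform sandwich produces a common, $n$-independent $L^1$-dominant $C''d(x)^{-\theta}+C'$ for the family $\{g(\cdot,u_{\alpha_n})\}$ by the same estimate; meanwhile $u_{\alpha_n}\to u_\alpha$ uniformly on $\overline{\Omega}$ with $u_\alpha>0$ in $\Omega$ forces $g(x,u_{\alpha_n}(x))\to g(x,u_\alpha(x))$ pointwise in $\Omega$, using continuity of $g$ on $\overline{\Omega}\times(0,\infty)$. The Dominated Convergence Theorem then yields $\int_\Omega g(\cdot,u_{\alpha_n})\to \int_\Omega g(\cdot,u_\alpha)$, and continuity of $H$ follows after raising to the power $r$ and multiplying by $\alpha_n$.

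The main obstacle is precisely the construction of this common $L^1$-dominant in case $(ii)$: it hinges on the $C^1_0(\overline{\Omega})$-regularity of $\Phi_1$ given by $(\tilde{h}_1)$ (strictly stronger than the $(\tilde{h}_0)$ used in case $(i)$) and on the integrability of $d^{-\theta}$ for $\theta<1$; without both ingredients the dominated convergence step would fail at the boundary. Every other step reduces to the previously established continuity and monotonicity of $T$.
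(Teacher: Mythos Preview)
Your proof is correct and follows essentially the same route as the paper: the uniform sandwich $\underline{\alpha}^{q}\Phi_1\le u_{\alpha_n}\le \overline{\alpha}^{\,l}e^{\,t}$, the boundary estimate $g(x,u_\alpha)\lesssim d(x)^{-\theta}$ via $(\ref{4.3})$ in case $(ii)$, and the Dominated Convergence Theorem for continuity. The only cosmetic difference is that the paper localizes the $d^{-\theta}$ bound to a boundary strip $\Omega_{\epsilon'}$, whereas you carry the global dominant $C''d^{-\theta}+C'$; both are equivalent.
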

\begin{proof}
Take  $\alpha > 0$. It follows from Lemmas 3.1, 3.2 and the monotonicity established in 
Proposition 3.1, we can find $0 < \underline{\alpha} = \underline{\alpha}(\alpha) < \alpha_0$ and $\overline{\alpha} = \overline{\alpha}(\alpha) > \alpha_{\infty}$ such that 
\begin{equation}
\label{118}
\underline{\alpha}^q\Phi_1 \leq u_{\alpha} \leq \overline{\alpha}^le^t~\mbox{in }\Omega,
\end{equation}
where $\Phi_1$ and $e$ are the solutions of the Problems (\ref{autovalor}) and (\ref{peso}), respectively.

First, let us assume  $(ii)$ holds. So, by choosing an $\epsilon' ,t_0>0$ sufficiently small such that $\overline{\alpha}^le^t < t_0$ for all $x \in \Omega_{\epsilon'} = \{ x \in \Omega: \mbox{dist}(x, \partial \Omega) < \epsilon'\} $, we obtain from (\ref{4.3}), (\ref{118}) and hypothesis $(ii)$, that $0 < g(x,u_{\alpha}) \leq \tilde{f}(x)  d(x)^{-\theta}$ in $\Omega_{\epsilon'} $ for some $0<\tilde{f} \in C(\overline{\Omega})$.

Since $\theta < 1$,  it follows from \cite{LAZ} and the previous  inequality that $ g(x, u_{\alpha}) \in L^1(\Omega_{\epsilon'}),$   which proves that $H$ is well-defined in this case.

About the case  $(i)$, the result follows directly from the  fact that $e$ is a bounded function.  So, in both cases, we showed that $T(\alpha) \in D(G)$ for each $\alpha>0$ given.

To show the continuity, consider $\alpha_n\rightarrow \alpha>0$. By an analogous argument as in first part, we can conclude that in any case there exists a $ h(x) \in L^1(\Omega)$ such that $g(x, u_{\alpha_n}) \leq h(x) $, for all $x \in \Omega$ and $n \in \mathbb{N}$. Thus, the continuity follows from the Lemma 3.3 and Convergence Dominated Theorem.
\fim
\end{proof}
\\

After this Lemma, by using the uniqueness claimed in Theorem \ref{T6}, we obtain the next one.

\begin{lemma}\label{lema4.1} Let $\lambda>0$. Then Problem $(P_\lambda)$ admits a $W_{loc}^{1,p}(\Omega)$-solution if, and only if, there exist $(\alpha,u)=(\alpha_\lambda,u_\lambda) \in (0,\infty) \times W^{1,p}_{loc}(\Omega)$ solution of $(\ref{arcoya21})$. In particular, Problem $(P_\lambda)$ admits a $W_{loc}^{1,p}(\Omega)$-solution if, and only if, $\lambda \in H((0,\infty))$.
\end{lemma}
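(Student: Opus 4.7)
\medskip
\noindent\textbf{Proof proposal.} The plan is to verify the equivalence directly by passing between the coupled problem $(P_\lambda)$ and the decoupled system (\ref{arcoya21}), and then to invoke the uniqueness from Theorem \ref{T6} to reduce the second line of (\ref{arcoya21}) to the scalar equation $H(\alpha)=\lambda$. The argument is essentially bookkeeping rather than analysis.

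For the forward implication, assume $u\in W^{1,p}_{loc}(\Omega)$ is a solution of $(P_\lambda)$. By Definition \ref{sf}, identity (\ref{sol}) is a genuine numerical equality, which forces the non-local coefficient $G(u)=(\int_\Omega g(x,u)\,dx)^r$ to be a finite and strictly positive real number (positivity because $g>0$ and $u>0$ pointwise a.e. on positive-measure sets). Setting $\alpha:=\lambda/G(u)\in(0,\infty)$ and dividing (\ref{sol}) by $G(u)$ yields exactly the first line of (\ref{arcoya21}), while $\alpha G(u)=\lambda$ gives the second by construction. The interior positivity property of $u$ required for $(L_\alpha)$ is inherited from Definition \ref{sf}.

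For the reverse implication, given $(\alpha,u)\in(0,\infty)\times W^{1,p}_{loc}(\Omega)$ solving (\ref{arcoya21}), multiply the first line by $G(u)=\lambda/\alpha$; the right-hand side becomes $\lambda\int_\Omega (au^{-\delta}+bu^\beta)\varphi\,dx$ and the left-hand side becomes $(\int_\Omega g(x,u)\,dx)^r\int_\Omega |\nabla u|^{p-2}\nabla u\nabla\varphi\,dx$, which is precisely (\ref{sol}). The positivity clause of Definition \ref{sf} is inherited from the fact that $u$ is a $W^{1,p}_{loc}(\Omega)$-solution of $(L_\alpha)$.

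For the ``in particular'' statement, combine the equivalence above with Theorem \ref{T6}: any $u$ figuring in a solution of (\ref{arcoya21}) solves $(L_\alpha)$, so by uniqueness $u=T(\alpha)=u_\alpha$. The second equation of (\ref{arcoya21}) then reads $\alpha G(u_\alpha)=H(\alpha)=\lambda$, so solvability of $(P_\lambda)$ is equivalent to $\lambda\in H((0,\infty))$. The only subtle point, not really an obstacle, is guaranteeing $G(u)\in(0,\infty)$ for every $W^{1,p}_{loc}(\Omega)$-solution of $(P_\lambda)$; this is built into Definition \ref{sf} since the identity (\ref{sol}) is required to hold as an equality of finite quantities, and positivity of $G$ follows from $g>0$ on $\overline{\Omega}\times[0,\infty)$ (or on $\overline{\Omega}\times(0,\infty)$ together with $u>0$ in $\Omega$).
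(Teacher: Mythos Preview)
Your proof is correct and follows the same approach the paper indicates: the paper gives no explicit proof, only the remark ``After this Lemma, by using the uniqueness claimed in Theorem \ref{T6}, we obtain the next one,'' and your argument is precisely the natural elaboration of that remark---decouple $(P_\lambda)$ into $(L_\alpha)$ plus the scalar constraint, then invoke uniqueness to identify $u$ with $T(\alpha)$ and reduce the constraint to $H(\alpha)=\lambda$. The only minor addition worth making explicit is that the finiteness of $G(T(\alpha))$ used in the ``in particular'' direction is exactly the content of Lemma \ref{lema4.2}, which you rely on implicitly.
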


As a rereading of the above Lemma and a consequence of Lemma \ref{CONT},  we conclude that 
$$\Sigma = \{(H(\alpha), u_{\alpha}) \in (0,\infty)\times C(\overline{\Omega})~/ ~\alpha \in (0, \infty)~\mbox{and }u_\alpha\in W_{loc}^{1,p}(\Omega)~\mbox{is a solution of }(L_\alpha)\}$$
is the {\it continuum} of solutions to Problem $(P_\lambda)$ given by a curve.
\vspace{0.2cm}

\noindent\textbf{\textit{Proof of Theorem $1.2$-Completed:}} 

\begin{itemize}
\item[1-a)] 
Firstly,  note that by the continuity of g  and  Lemma \ref{lema1}, we have that \linebreak $\displaystyle\lim_{\alpha \rightarrow 0}H(\alpha) = 0$. We will split the prove in two cases:
\begin{itemize}
\item[$i)$] \textit{case $1$: $r \geq 0$.} If $\theta_1 \geq 0$, then by taking  $U \subset \subset \Omega$ and using  (\ref{4}) together with  Lemma \ref{lema2}, we obtain that 
\[\int_\Omega g(x, u_{\alpha})dx\geq \int_{U} g(x, u_{\alpha})dx \geq {C}{\alpha^{-\theta_1l}}\]
holds for all $\alpha$ sufficiently  large. So, choosing $l \in \Big(\frac{1}{p - 1 - \beta}, \frac{1}{\theta_1 r}\Big)$, we get 
\[H(\alpha)=\alpha\left(\int_\Omega g(x, u_{\alpha})dx\right)^r\geq C \alpha^{1- r\theta_1l}\rightarrow \infty\mbox{ as } \alpha\rightarrow \infty.\]

Suppose now that $ \theta_1 <0 $. In this case, by using (\ref{4}) and Lemma \ref{lema2} again, we get 
$$ H(\alpha) \geq  C\alpha^{1-q\theta_1r} \rightarrow \infty \mbox{ as} \ \alpha \rightarrow \infty. $$
So, in both cases we have $H(\alpha) \to \infty $ as $\alpha \to \infty$.

\item[$ii$)] \textit{case $2$: $r < 0$.} {Consider the case $\theta_1 \geq 0$.   By the hypothesis (\ref{4}) and the continuity of $g$, we obtain }
$ \displaystyle\int_{\Omega} g(x,u_\alpha) dx  \leq C$,
that is, $H(\alpha) \geq C \alpha \rightarrow \infty$ as $ \alpha \rightarrow \infty$.

Analogously, when $\theta_1 < 0$, we obtain by the Lemma \ref{lema2} and the hypothesis (\ref{4})
 $$H(\alpha) \geq C\alpha \Big(1 + \alpha^{-\theta_1l}\Big)^r = C\Big(\alpha^{\frac{1}{r}} + \alpha^{\frac{1}{r} -\theta_1l}\Big)^r $$
 showing that $H(\alpha) \rightarrow \infty$ as $\alpha \rightarrow \infty$ by choosing   $l <{1}/{\theta_1 r}$ in Lemma \ref{lema2}. 
Hence, in all cases we have $H(\alpha) \rightarrow 0$ as $\alpha \rightarrow 0$ and $H(\alpha) \rightarrow \infty$ as $\alpha \rightarrow \infty$. Since $H$ is continuous (see Lemma \ref{lema4.1}),  our claim  follows.
\end{itemize}
To finish the proof, it just remains  to show the behavior of the {\it continuum} $\Sigma$ at $\lambda=0$ and $\lambda=\infty$.
To $\lambda=0$, let us take $\epsilon > 0$ and define $\delta = \displaystyle\inf_{[\epsilon, \infty)} H(\alpha)$. So, it follows from the Lemma \ref{CONT} and  $H(\alpha)\to \infty$ as $\alpha \to \infty$ that $\delta > 0$ and $(0, \delta) \subset H((0, \epsilon))$, that is, for each $\lambda_n \in (0, \delta)$ given there exists an $\alpha_n \in (0, \epsilon)$ such that $H(\alpha_n) = \lambda_n$. So,  if $\lambda_n \rightarrow 0$, then $\alpha_n \rightarrow 0$, which implies by the Lemma \ref{lema1} that $\| u_{\alpha_n}\|_{\infty} \rightarrow 0$. To $\lambda =\infty$, define $m = \displaystyle\max_{[0, M]} H(\alpha)$ for each  $M> 0$ given, so  $m < \infty$ and $(m, \infty) \subset H((M, \infty))$, that is, for each $\lambda_n \in (m, \infty)$, there exists $\alpha_n \in (M, \infty)$ such that $\lambda_n = H(\alpha_n)$. Hence,  if $\lambda_n \rightarrow \infty$, then $\alpha_n \rightarrow \infty$ and so  by using  Lemma \ref{lema2}, we obtain that $\|u_{\alpha_n}\|_{\infty} \rightarrow \infty. $ See picture Fig. 1.

\item[1-$b$)] Initially suppose that $ r > 0 $.  In this case $ \theta_1> 0 $, because $\theta_1 r > p-1-\beta > 0.$ By the hypotheses (\ref{4}) we obtain  $g(x, t) \leq C_1t^{-\theta_1}$ for all $t \geq t_0$. Remembering that  $ Cd (x) \leq \Phi_1(x) $ in $ \Omega $ holds because we are assuming  $(\tilde{h}_1)$, it follows from Lemma \ref{lema2} that
\begin{eqnarray*} \displaystyle\int_{\Omega} g(x, u_{\alpha})dx & = & \displaystyle\int_{[u_{\alpha} \geq t_0]} g(x, u_{\alpha})dx + \displaystyle\int_{[u_{\alpha} < t_0]} g(x, u_{\alpha})dx \\ & \leq & C\Big(\displaystyle\int_{[u_{\alpha} \geq t_0]} g(x, u_{\alpha})dx + \displaystyle\int_{[C\alpha^qd(x) < t_0]}1dx\Big)  \\ & \leq & C\Big( \alpha^{-q\theta_1} + \alpha^{-qN}\Big) \end{eqnarray*} 
for $\alpha>0$ large enough. So,
$$H(\alpha) \leq \alpha C\Big(  \alpha^{-q\theta_1} + \alpha^{-qN} \Big)^r = C \Big( \alpha^{\frac{1}{r} -q\theta_1} +  \alpha^{\frac{1}{r} -qN}\Big)^r \rightarrow 0 ~\mbox{as}~\alpha \rightarrow \infty$$
since we are  taking $q \in \Big(\frac{1}{r\max\{\theta_1, N\}}, \frac{1}{p-1-\beta}\Big). $

Let us now consider the case where $ r <0 $. In this case, $ \theta_1 <0 $, because we are using $\theta_1r > p-1-\beta > 0$ again. Hence, in an  analogous way to that done above, we can prove that  $H(\alpha) \leq C \alpha^{1 - q\theta_1 r}$
for sufficiently large $\alpha>0$. Thus, by taking $q \in \Big(\frac{1}{\theta_1r}, \frac{1}{p-1-\beta}\Big), $ we get $H(\alpha) \rightarrow 0 $ as $\alpha \rightarrow \infty.$ 

In any case, we proved that $\displaystyle\lim_{\alpha \rightarrow \infty} H(\alpha) = 0$. On the other hand, $ H (\alpha) \rightarrow 0 $ as $\alpha \rightarrow 0$, therefore by  taking $\lambda^* = \displaystyle\sup_{\mathbb{R}^+} H(\alpha)$, the claims follows.

About the behavior of $ \Sigma$. Letting $(\lambda,u) \in  \Sigma$ we have that  $\lambda \leq  \lambda^*$. Since $\displaystyle\lim_{\alpha \rightarrow 0} H(\alpha) = \displaystyle\lim_{\alpha \rightarrow \infty} H(\alpha) = 0$,  we get $(0, \delta) \subset H((0, \epsilon) \cap H((M, \infty))$ for each $\epsilon > 0$ small and $M > 0$ large, where $0 < \delta = \displaystyle\min_{[\epsilon, M]} H(\alpha)$. Thus, for each $\lambda_n \in (0, \delta)$ there exists $\alpha^1_n \in (0, \epsilon)$ and $\alpha_n^2 \in (M, \infty)$ such that $\lambda_n = H(\alpha_n^1) = H(\alpha_n^2)$. So, if $\lambda_n \rightarrow 0 $ imply that $\alpha_n^1 \rightarrow 0 $ and $\alpha_n^2 \rightarrow \infty$, which lead us to conclude that  $\|u_{\alpha_n^1}\|_{\infty} \rightarrow 0 $ and $\|u_{\alpha_n^2}\|_{\infty} \rightarrow \infty $ after to use Lemmas \ref{lema1} and \ref{lema2} again. See Fig. 2.  

\item[2-$a$)]
 Initially assume that $ r > 0 $.  In this case $ \theta_2> 0 $, because $\theta_2 r > p-1 + \delta > 0.$  Then, by taking $ l < \frac{1}{p - 1 + \delta}  $, $\alpha > 0$ sufficiently  close to $0$, using the hypothesis (\ref{5}) and Lemma \ref{lema1}, we get
\begin{eqnarray}\label{6}
H(\alpha)\geq  C\alpha\left(\displaystyle\int_{\Omega} \frac{1}{\alpha^{l\theta_2}e(x)^{t\theta_2}}dx\right)^r
= C\alpha^{1- r\theta_2l}
\end{eqnarray}
for some $C > 0$  constant. 
By choosing  $ l>{1}/{\theta_2r}  $ in Lemma \ref{lema1}, we obtain from (\ref{6})  that $H(\alpha) \rightarrow + \infty$ as $\alpha \rightarrow 0^+. $ In the same way, when $r, \theta_2 < 0$ by the hypotheses (\ref{5}) and the Lemma \ref{lema1}, we obtain $H(\alpha) \rightarrow + \infty$ as $\alpha \rightarrow 0^+. $

On the other hand, by following the same idea as used to prove the item 1-a), we can verify that $H(\alpha)\rightarrow \infty$ when $\alpha\rightarrow \infty$.   Thus, by considering $\lambda^*= \displaystyle\inf_{\alpha\in\mathbb{R}^+}H(\alpha)$, the result follows. 
\item[2-$b$)] By the same arguments as used to prove the items $1-b)$ and $2-a)$, we can verify that 
 $H(\alpha) \stackrel{\alpha \rightarrow \infty}{\longrightarrow} 0$ and $H(\alpha)  \stackrel{\alpha \rightarrow 0^+}{\longrightarrow} \infty$, so the result follows again. These ends the proof of Theorem $1.2$.
\fim
\end{itemize}

Similarly to the cases  $1-a)$ and $ 1-b)$, we are able to verify that the \textit{continuum} $\Sigma$ behaves as in the figures Fig.3 (item $2-a)$) and Fig. 4 (item $2-b)$), respectively. 

\begin{remark} Despite of our objective in this paper is to present situations of how to break the uniqueness of $W_{loc}^{1,p}(\Omega)$-solution to local Problem $(L_\alpha)$ by the introduction of a non-local term, we note that is still possible to obtain uniqueness of solutions to non-local Problem 
$(P_\lambda)$. For instance, when $g(t) = t^{\gamma}$ for $t>0$ with either $\{ \gamma>0 ~ \mbox{and }~r>0\}$ or $\{ -1<\gamma <0~ \mbox{and }~r<0\}$.
\end{remark}

\section{Appendix}

In this section, lets sketch  the proof of existence in  Theorem 1.1. For this, we will consider the following auxiliary problem:
\begin{equation}\label{n-lsp} \left\{
\begin{array}{l}
  -\Delta_p u=  \frac{a_n(x)}{(u + \frac{1}{n})^{\delta}}  + b_n(x)u^{\beta}  ~\mbox{in } \Omega,\\
    u>0    ~\mbox{in } \partial\Omega,~~
    u>0    ~\mbox{on }  \Omega
 \end{array}
\right.
\end{equation}
where $ a_n(x) = \min\{a(x), n \}$ and $b_n(x) = \{b(x), n\}$, with $n \in \mathbb{N}$.

\begin{lemma}\label{L1}
For each $n\in \mathbb{N}$, the problem $(\ref{n-lsp})$ admits a solution $u_n \in W_0^{1,p}(\Omega) \cap C^{1, \alpha}(\overline{\Omega})$. Furthermore, for each compact set $ K \subset \subset \Omega$ there  exists  $c_{K} > 0$ such that 
$ u_n \geq c_K > 0 \ \mbox{in}  \ K,$  for all $ n  \in \mathbb{N}. $
\end{lemma}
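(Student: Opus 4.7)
The plan is to exploit the regularization: for each fixed $n$, the right-hand side of $(\ref{n-lsp})$ has $a_n,b_n\in L^\infty(\Omega)$, the singular factor is truncated by $(u+1/n)^{-\delta}\le n^{\delta}$, and the growth $u^\beta$ is strictly $(p-1)$-sublinear. Hence $(\ref{n-lsp})$ is a standard $p$-Laplace equation with a bounded Carath\'eodory right-hand side, and classical machinery applies.

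\textbf{Step 1 (existence and boundedness for fixed $n$).} The natural route is to minimize
\[
J_n(u)=\frac{1}{p}\int_\Omega|\nabla u|^p\,dx-\int_\Omega F_n(x,u^+)\,dx,\qquad F_n(x,s)=\int_0^s\!\Big[\frac{a_n(x)}{(t+1/n)^\delta}+b_n(x)\,t^\beta\Big]dt,
\]
on $W_0^{1,p}(\Omega)$. Coercivity is immediate from $\beta<p-1$ and the $L^\infty$ bounds on $a_n,b_n$, and weak lower semicontinuity is standard (in fact much easier than in Lemma \ref{L2}, since the integrand is globally bounded on bounded subsets of $W_0^{1,p}(\Omega)$ via Sobolev embedding). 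A minimizer $u_n\ge0$ is a weak solution, and De Giorgi/Moser iteration then promotes $u_n$ to $L^\infty(\Omega)$.

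\textbf{Step 2 ($C^{1,\alpha}$ regularity and positivity for fixed $n$).} With $u_n\in L^\infty$, the right-hand side of $(\ref{n-lsp})$ is bounded, so Lieberman's boundary regularity for the $p$-Laplacian yields $u_n\in C^{1,\alpha}(\overline{\Omega})$ for some $\alpha\in(0,1)$. Because $a+b>0$ forces $a_n+b_n>0$ on a set of positive measure, testing $J_n$ with a small bump $t\varphi$, $0\le\varphi\in C_c^\infty(\Omega)$, gives $J_n(t\varphi)<0$ for $t$ small (the singular primitive $\int_0^{t\varphi}a_n(x)(s+1/n)^{-\delta}ds$ dominates thanks to $\delta>0$), and hence $u_n\not\equiv 0$. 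V\'azquez's strong maximum principle for the $p$-Laplacian then gives $u_n>0$ in $\Omega$.

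\textbf{Step 3 (uniform interior positivity).} The crux is $u_n\ge c_K$ on every $K\subset\subset\Omega$, with $c_K$ independent of $n$. The idea is to prove the monotonicity $u_1\le u_2\le\cdots$ and then invoke Step 2 for $n=1$: since $u_1\in C^{1,\alpha}(\overline{\Omega})$ is strictly positive in $\Omega$, continuity yields $u_1\ge c_K>0$ on any compact $K\subset\subset\Omega$. For the monotonicity, pointwise in $(x,s)$ the map
\[
n\mapsto \frac{a_n(x)}{(s+1/n)^\delta}+b_n(x)\,s^\beta
\]
is non-decreasing, so $u_{n+1}$ is a supersolution of the $n$-th truncated problem. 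A weak comparison principle for $-\Delta_p u=f_n(x,u)$, with $f_n$ bounded, non-increasing in $u$ in its singular part and $(p-1)$-sublinearly growing in its regular part, then gives $u_n\le u_{n+1}$. Testing both equations against $(u_n-u_{n+1})^+\in W_0^{1,p}(\Omega)$ and applying an elementary Picone-type manipulation closes the argument; the cut-offs of Section 2 are unnecessary here since the data are bounded.

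\textbf{Main obstacle.} The substantive point is Step 3: a comparison argument for the mixture of a term decreasing in $u$ (the truncated singular piece) and one increasing but $(p-1)$-sublinear in $u$ (the $b_n u^\beta$ piece). Boundedness of the data makes the Picone/test-function analysis qualitatively lighter than in Theorem \ref{unicidade1}, but one still must use $\beta<p-1$ to absorb the wrong-signed direction on $\{u_n>u_{n+1}\}$. Once monotonicity is established, uniform interior positivity follows for free from the $n=1$ solution.
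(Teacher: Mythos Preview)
Your proposal is correct in outline, but the route differs from the paper's in two places, and the second difference is worth knowing.

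For existence (your Steps~1--2) the paper does not minimize $J_n$ directly. Instead it freezes the nonlinearity, solves $-\Delta_p \omega = a_n/(|v|+1/n)^\delta + b_n|v|^\beta$ for each $v\in L^p(\Omega)$ via a strictly convex functional, and applies Schaefer's fixed point theorem to the solution map $v\mapsto \omega$ on $L^p(\Omega)$; the a~priori bound uses $\beta<p-1$. Your direct variational approach is arguably cleaner here and reaches the same conclusion after Lieberman and V\'azquez.

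The substantive difference is Step~3. You obtain the uniform interior lower bound by proving the monotonicity $u_1\le u_2\le\cdots$, which---as you correctly flag---forces you to handle the increasing sublinear piece $b_n u^\beta$ via a Picone-type comparison (essentially the $W_0^{1,p}$ clause of Theorem~\ref{unicidade1}, adapted to the truncated nonlinearity). The paper bypasses this completely: it introduces an auxiliary barrier $\tilde u_1$ solving
\[
-\Delta_p u = \dfrac{a_1(x)}{(1+u)^\delta}\quad\mbox{in }\Omega,\qquad u=0\quad\mbox{on }\partial\Omega.
\]
This right-hand side is \emph{strictly decreasing in $u$} and contains no $b$-term at all, so the elementary test with $(\tilde u_1 - u_n)^+$ and monotonicity of $-\Delta_p$ give $\tilde u_1\le u_n$ for every $n$ in one line (each $u_n$ is a supersolution of the $\tilde u_1$-problem since $a_n\ge a_1$, $1/n\le 1$, and $b_n u_n^\beta\ge 0$). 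Then $c_K=\inf_K\tilde u_1>0$ by $C^{1,\alpha}$ regularity of $\tilde u_1$. Your approach yields more information (the full monotone sequence), at the price of invoking the heavier comparison machinery; the paper's single-barrier trick is shorter and needs only the decreasing part of the nonlinearity.
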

\begin{proof}
For each $ v \in L^p(\Omega)$, we claim that there exists a unique function $ \omega \in W_0^{1,p}(\Omega)$ solution  of
\begin{eqnarray}\label{3.1} -\Delta_p \omega = \frac{a_n(x)}{(|v| + \frac{1}{n})^{\delta}} + b_n(x)|v|^{\beta}. 
\end{eqnarray}
In fact, consider the functional $ J: W_0^{1,p}(\Omega) \rightarrow \mathbb{R}$ defined by 
$$ J(\omega) = \frac{1}{p}\displaystyle\int_{\Omega} |\nabla \omega|^p dx - \displaystyle\int_{\Omega} \frac{a_n(x)}{(|v| + \frac{1}{n})^{\delta}} \omega dx - \displaystyle\int_{\Omega} b_n(x)|v|^{\beta}\omega dx.$$

We can easily verify that $J$ is differentiable, strictly convex and coercive. Hence $J$ admits a unique critical point, that is, (\ref{3.1}) admits a solution.

Denoting by $ S: L^p(\Omega) \rightarrow L^p(\Omega)$ the operator, which associates to each $v \in L^p(\Omega)$ the unique solution $w = S(v) \in L^p(\Omega)$ of (\ref{3.1}), one can prove that $S$ is a continuous and compact operator. Furthermore, if $  \omega = \lambda S(\omega)$  for some  $\lambda \in (0, 1]$ and $\omega \in W_0^{1,p}(\Omega)$, then by Poincar\'e's and 
H\"{o}lder inequalities 
\begin{eqnarray*}
 {\|\omega\|_{p}}^p & \leq &  C\lambda^p\displaystyle\int_{\Omega} |\nabla S(\omega)|^p dx= C\lambda^p\displaystyle\int_{\Omega} \Big[\frac{a_n}{(\frac{1}{n}+|\omega|)^{\delta}}S(\omega) + b_n(x)|\omega|^{\beta} S(\omega)\Big]dx \\
 & \leq & C\lambda^{p-1}\displaystyle\int_{\Omega}\Big( n^{1+\delta}|\omega| + n|\omega|^{\beta +1} \Big) dx \leq C\Big( \|\omega\|_{p} + \|\omega\|_{p}^{\beta+1} \Big), 
 \end{eqnarray*}
where $C > 0$ is a cumulative constant. 
 
Thus, by the previous inequality, there exists a positive constant $R$, independent of $\lambda$ and $\omega$, such that $\|\omega\|_{p} \leq R$. So, by the Schaefer Fixed Point Theorem, there exists a $u_n \in W_0^{1,p}(\Omega)$ such that $S(u_n) = u_n$.

Note that $ {a_n}{(|t| + \frac{1}{n})^{-\delta}} + b_n |t|^{\beta} \leq C(1 + |t|^{\beta})$, so  by \cite{LIB}  we have that $u_n \in C^{1, \alpha}(\overline{\Omega}), $ for some $\alpha \in (0,1)$. Furthermore, $  {a_n}{(|u_n| + \frac{1}{n})^{-\delta}} + b_n |u_n|^{\beta} \geq 0 $, thus $ u_n \geq 0$  which by \cite{VAZ} implies $ u_n > 0$ in $\Omega$. Therefore, $ u_n$ is a positive solution of (\ref{n-lsp}).

Beside this, suppose that $\tilde{u}_1$ is a solution of 
\begin{eqnarray}\label{3.2}
  -\Delta_p u  = \frac{a_1(x)}{(1 + u)^{\delta}}  \ \mbox{in} \ \Omega, \ u > 0 \ \mbox{in} \ \Omega  \ \mbox{and}  \ u  = 0 \ \mbox{on} \ \Omega. 
  \end{eqnarray}
Taking $ (\tilde{u}_1 - u_n)^+ \in W_0^{1,p}(\Omega)$ as a test function in (\ref{n-lsp}) and in (\ref{3.2}), we  get
\begin{eqnarray*}
 \displaystyle\int_{\Omega} (|\nabla \tilde{u}_1| + |\nabla u_n|)^{p-2}|\nabla( \tilde{u}_1 - u_n)^+|^2 dx &\leq & C\displaystyle\int_{\Omega} \Big( |\nabla \tilde{u}_1|^{p-2}\nabla \tilde{u}_1 - |\nabla u_n|^{p-2}\nabla u_n\Big) \nabla (\tilde{u}_1 - u_n)^+ dx \\
 & \leq & C\displaystyle\int_{\Omega} a_1 \Big[ \frac{1}{(1+ \tilde{u}_1)^{\delta}} - \frac{1}{(1 + u_n)^{\delta}} \Big](\tilde{u}_1 - u_n)^+ dx \leq 0.
 \end{eqnarray*} 
Therefore, $( \tilde{u}_1 - u_n)^+ = 0,$ that is, $\tilde{u}_1 \leq u_n $ in $\Omega$.  

Finally, by \cite{LIB} we concluded that $\tilde{u}_1 \in C^{1, \alpha}(\overline{\Omega})$ for some $\alpha \in (0,1)$. Therefore, using this and the positivity of $\tilde{u}_1$ in $\Omega$, the last part of the Lemma follows.
\fim
\end{proof}
\vspace{0.2cm}

\noindent\textbf{\textit{Proof of Theorem 1.1 ( Existence-Conclusion):}}
Consider a sequence $(\Omega_k)$ of open sets in $\Omega$ such that $\Omega_k \subset \Omega_{k+1}$ and $\bigcup\limits_{k} \Omega_k = \Omega$, and define $ \delta_k = \displaystyle\inf_{\Omega_k} \tilde{u}_1  > 0,$ where $\tilde{u}_1$ is the solution of (\ref{3.2}). Take $\varphi = (u_n - \delta_1)^+$ as a test function in (\ref{n-lsp}). If  $(h)_1$ holds, then we have
\begin{eqnarray*} 
\displaystyle\int_{u_n > \delta_1} |\nabla u_n|^{p} dx & \leq &  \displaystyle\int_{u_n > \delta_1} \Big(\frac{a}{u_n ^{\delta - 1}} + bu_n^{\beta + 1}\Big)dx \\
& \leq & \|a\|_{(\frac{p^*}{1-\delta})'}\Big(\displaystyle\int_{u_n > \delta_1} u_n^{p^*}dx\Big)^{\frac{1-\delta}{p^*}} + \|b\|_{(\frac{p^*}{\beta + 1})'} \Big( \displaystyle\int_{u_n > \delta_1} u_n^{p^*} dx\Big)^{\frac{\beta + 1}{p^*}}\\
& \leq & C\Big[1 + \Big(\displaystyle\int_{u_n > \delta_1} |\nabla u_n|^{p}dx\Big)^{\frac{1-\delta}{p}} + \Big( \displaystyle\int_{u_n > \delta_1} |\nabla u_n|^{p}dx\Big)^{\frac{\beta + 1}{p}} \Big].
\end{eqnarray*}

In a similar way, we obtain
\begin{eqnarray*} 
\displaystyle\int_{u_n > \delta_1} |\nabla u_n|^{p}dx & \leq &  \displaystyle\int_{u_n > \delta_1} \Big(\frac{a}{u_n ^{\delta - 1}} + bu_n^{\beta + 1} \Big)dx \leq \delta_1^{1 - \delta}\displaystyle\int_{\Omega} a dx  + \|b\|_{(\frac{p^*}{\beta + 1})'} \Big( \displaystyle\int_{u_n > \delta_1} u_n^{p^*}dx\Big)^{\frac{\beta + 1}{p^*}}  \\
& \leq & C\Big[ 1 +  \Big( \displaystyle\int_{u_n > \delta_1} |\nabla u_n|^{p}dx\Big)^{\frac{\beta + 1}{p}}\Big]
\end{eqnarray*}
is true if either $(h)_2$ or $(h)_3$ holds.

Therefore,  we conclude in both the cases that  $\displaystyle\int_{\Omega_1} |\nabla u_n|^{p}dx$ will be bounded. Furthermore, since $ (u_n - \delta_1)^+ \in W_0^{1,p}(\Omega)$ we have
\begin{eqnarray*}
\displaystyle\int_{\Omega_1} {u_n }^p dx & \leq & \displaystyle\int_{u_n > \delta_1} {u_n }^p dx\leq C\Big[ 1+  \displaystyle\int_{\Omega} {(u_n - \delta_1)^+}^p dx\Big] \\
& \leq & C\Big[ 1+ \displaystyle\int_{u_n > \delta_1} |\nabla u_n|^p  dx\Big] \leq  C. 
\end{eqnarray*}
Thus, we conclude that $(u_n)$ is bounded in $W^{1,p}(\Omega_1)$. Hence, there exists $u_{\Omega_1} \in W^{1,p}(\Omega_1)$ and a subsequence $(u_{n_j^1})$ of $(u_n)$ such that 
$$
 \left\{
\begin{array}{l}
u_{n_j^1} \rightharpoonup u_{\Omega_1} \ \ \mbox{weakly in } \ \ W^{1,p}(\Omega_1) \\ \mbox{and strongly in } \ \ L^q(\Omega_1) \ \ \mbox{for} \ 1 \leq q < p^* \\
u_{n_j^1} \rightarrow u_{\Omega_1}  \ \ 
a.e  \ \mbox{in } \ \Omega_1.
\end{array}
\right.
$$

Proceeding as above, we can obtain subsequences $(u_{n_j^k})$ of $(u_n)$, where $(u_{n_j^{k + 1}}) \subset (u_{n_j^{k}})$, and functions $ u_{\Omega_k} \in W^{1,p}(\Omega_k)$ such that 
$$
 \left\{
\begin{array}{l}
u_{n_j^k} \rightharpoonup u_{\Omega_k} \ \ \mbox{weakly in } \ \ W^{1,p}(\Omega_k) \ \ \mbox{and strongly in }  \ L^p(\Omega_k) \ \mbox{for} \ 1 \leq q < p^*,  \\
u_{n_j^k} \rightarrow u_{\Omega_k} \ 
a.e \ \mbox{in}   \ \Omega_k.
\end{array}
\right.
$$

By construction, $ u_{\Omega_{k+1}}\Big|_{\Omega_k} = u_{\Omega_k}. $ Defining $$u = \left\{
\begin{array}{l} u_{\Omega_1} \ \ \mbox{in} \ \ \Omega_1, \\ u_{\Omega_{k+1}} \ \ \mbox{in} \ \ \Omega_{k+1}\backslash \Omega_k,\end{array}
\right.$$ then $u \in  W_{loc}^{1,p}(\Omega)$. Furthermore, by following close arguments as done in \cite{ELVES}, we are able to show that $u$ is a positive solution of (\ref{lsp}). 

To finish the proof, let us note that when $ \delta \leq 1$, by taking $u_n$ as test function in (\ref{n-lsp}) and following  similar arguments  as done above one can conclude that $(u_n)$ is bounded in $W_0^{1,p}(\Omega).$ In this case, $u$ defined as above belongs to $W_0^{1,p}(\Omega). $
\fim


\begin{thebibliography}{00}


\bibitem{AG} Aguilar,  J.A., Peral, I., On a Elliptic Equation with Exponential Growth, Ren. Sem. Mat.Univ. Padova, \textbf{96}, pp. 143-175, 1996.

\bibitem{ALY}  Aly, J.J.,  Thermodynamics of a two-dimensional self gravitating systems, Physical review. E, \textbf{49} (5), 3771 - 3783, 1994.

\bibitem{CLA1} Alves, C. O,  Covei, D.,  Existence of solution for a class of nonlocal elliptic problem via sub-supersolution method, Nonlinear Anal. Real World Appl. \textbf{23}, 1-8, 2015.





\bibitem{LAC}  Bebernes, J. W., Lacey,  A. A.,  Global existence and finite-time blow-up for a class of nonlocal parabolic problems, Adv. Diff. Eq. \textbf{ 2} n 6, 927-953, 1997.


%\bibitem{BOC}% Boccardo, L., Casado-D\'iaz, J., Some properties of solutions of some semilinear elliptic singular problems and applications to the G-convergence, Asymptotic Analysis \textbf{86} (2014), 1-15.

\bibitem{MUR} Boccardo,L.,  Murat, F.,  Almost everywhere convergence of the gradients of solutions to elliptic and parabolic equations , Nonlinear Anal. \textbf{ 19} n 6 ,  581-597, 1992. 

\bibitem{ORSINA} Boccardo, L., Orsina, L., Semilinear elliptic equations with singular nonlinearities, Calc. Var. \textbf{37}, 363 - 380,  2010.

\bibitem{CAG} Caglioti, E., Lions, P. L., Pulvirenti, M. , A special class of stationary flows for two-dimensional Euler equations: a statical mechanics description, Commun.Math. Phys. \textbf{143}, 501-525, 1992.

\bibitem{CAN1}  Canino, A., Sciunzi, B., A uniqueness result for some singular semilinear elliptic equations, Comm. Contemp.
Math. \textbf{18}, 1550084. 3, 9 pp., 2016.

\bibitem{CAN} Canino, A., Sciunzi, B., Trombetta, A., Existence and uniqueness for p-Laplace equations involving singular nonlinearities, NoDEA Nonlinear Differential Equations Appl. \textbf{23}, 23:8. 3, 2016.



\bibitem{CAR} Carrillo, J. A.,  On nonlocal elliptic equation with decreasing nonlinearity arising in plasma physics and heat conduction, Nonlinear Anal. \textbf{ 32} n 1, 97-115, 1998.

%\bibitem{CONG}% Cong, S., Han, Y., Compatibility conditions for the existence of weak solutions to a singular elliptic equation

\bibitem{COR} Corr\^ea, F. J., Delgado, M., Su\'arez, A.,  Some non-local problems with nonlinear diffusion, Math. Comput. Modelling \textbf{ 54} (9 - 10),  2293-2305, 2011.

\bibitem{CUESTA1} Cuesta, M., Eigenvalue problems for the p-laplacian with indefinite weights, Electron. J. Differential Equations, \textbf{ 2001}  n. 33, 1 -9, 2001.

%\bibitem{CUESTA}% Cuesta, M., Quoirin, H. R., A weighted eigenvalue problem for the p-laplacian plus a potential,  NoDEA Nonlinear Differential Equations Appl. 16 (2009), 469-491.



\bibitem{Saa} D\'iaz, J. L.,  Saa,  J. E., Existence et unicit\'e de solutions positives pour certaines \'equations elliptiques quasilin\'eaires, C.R.A.S. de Paris t. 305, Serie I, 521-524, 1987.




%\bibitem{GIL}
 %Ercole, G., Pereira, G. A., On a singular minimizing problem, arXiv:1507.06040v2. 

 
 \bibitem{MEL} Garc\'ia-Meli\'an, J., Lis, J. S.,  A boundary blow-up problem with a nonlocal reaction, Nonlinear Anal. \textbf{75}, 2774-2792, 2012. 
 
 
 %\bibitem{GAO} %Gao, W., Li, Q., Existence of Weak Solutions to a Class of Singular Elliptic Equations
 
 \bibitem{GUEDDA} Guedda, M., Veron, L., Quasilinear elliptic equations involving critical Sobolev exponents, Nonlinear Anal. \textbf{13} , 879-902, 1989.
 
 \bibitem{KR2} Krzywicki, A.,  Nadzieja, T.,  Nonlocal elliptic problems, Banach Center Publ. \textbf{52}, Polish Acad. Sci. Warsaw, 147 - 152, 2000.
 
\bibitem{KR}  Krzywicki, A., Nadzieja, T., Some results concerning the Poisson-Boltzmann equation,
Appl. Math. (Warsaw) \textbf{21}, 265-272,  1991.

\bibitem{MIGI} Kuusi, T., Mingione, G., Universal potential estimates, J. Funct. Anal.
\textbf{262} (10), 4205-4269, 2012.

\bibitem{LAC3} Lacey, A. A. , Diffusion models with blow-up,   J. Comp. Appl. Math. \textbf{97} (1-2), 39-49, 1998.

%\bibitem{LAC2} % Lacey,  A.A.,  Thermal runaway in a non-local problem modeling Ohmic heating: Part I: Model derivation and some special cases, Euro. J. Applied Math. \textbf{6} (1995),  127-144.

%\bibitem{LAC6}% Lacey,  A.A.,  Thermal runaway in a non-local problem modelling Ohmic heating: Part II: General proof of blow-up and asymptotics of runaway, Euro. J. Applied Math. \textbf{6} (1995), 201-224.

\bibitem{LAZ} Lazer,  A.C., McKenna, P.J.,  On a singular nonlinear elliptic boundary-value problem, Proc. Amer. Math. Soc. \textbf{111} (1),  721-730, 1991. 

%\bibitem{CHUN} Liao, J. F.,  Liu, R. Q. ,Tang, C. L.,  Wu, X. P.,  Positive solutions of Kirchhoff type problem with singular and critical nonlinearities in dimension four (2016).  


\bibitem{LIB} Lieberman, G. M.,  Boundary regularity for solutions of degenerate elliptic equations, Nonlinear Anal. 12, 1203 - 1219, 1988.

\bibitem{LOC}Loc, N. H., Schmitt, K., Boundary value problems for singular elliptic problems, Rocky Mountain J. Math.
\textbf{41} n. 2, 555-572, 2011.	

%\bibitem{SCH}% Loc, N. H., Schmitt, K.,  Applications of sub-supersolution theorems to singular nonlinear elliptic problems,  Advanced Nonlinear Studies \textbf{11} (2011), 493-524.


\bibitem{MA} Ma, T.; Yan, B.; The existence and multiplicity of positive solutions for a class of nonlocal elliptic problems; Bound.
Value Probl. \textbf{2016}, 165, 2016.
 

\bibitem{ELVES} Perera, K.,  Silva, E., On singular p-Laplacian problems,
Differential Integral Equations, \textbf{20} (1),  105-120, 2007.

 
% \bibitem{Sun} Sun, Y., Zhang, D., The role of the power 3 for elliptic equations with negative
%exponents, Calc. Var. Partial Differ. Equ. \textbf{49} (2014), 909-922.
 
%\bibitem{FUK} % Fukagai

%\bibitem{VAZ}% Vazquez, J.L., A Strong Maximum Principle for Some Quasilinear Elliptic Equations, Appl. Math. Optim \textbf{12} (1984), 191-202.

\bibitem{REN} Ren, Q., Yan, B.; Existence, uniqueness and multiplicity of positive solutions for some nonlocal singular elliptic problems; Electron. J. Differential Equations, \textbf{2017} ( 138),  1-21, 2017.  

\bibitem{souza}  Sousa, T. S. F.,   Rodrigo, C. M.,    Su\'arez, A., A non-local non-autonomous diffusion problem: linear and sublinear cases, Z. Angew. Math. Phys.  \textbf{68} ( 5), 20 pp,  2017.


\bibitem{VAZ}  Vazquez, J. L., A strong maximum principle for some quasilinear elliptic equations, Appl. Math. Optim. \textbf{12} (1), 191-202,  1984.

\bibitem{WO} Wolansky,  G.,  A critical parabolic estimate and application to non-local equations arising in chemotaxis, Appl. Anal. \textbf{66}, 291-321, 1997.

%\bibitem{BER}% Berbenes e Lacey, Global existence and finite-time blow-up for a class of nonlocal parabolic problems, 1997.


%\bibitem{CARR} %Carrillo ON A NONLOCAL ELLIPTIC EQUATION WITH DECREASING NONLINEARITY ARISING IN PLASMA PHYSICS AND HEAT CONDUCTION, 1998. 

%\bibitem{NAD}%TADEUSZ NADZIEJA,  NONLOCAL ELLIPTIC PROBLEMS, 2001.



%\bibitem{CLA2}% Claudianor,Covei. Existence of solution for a class of nonlocal elliptic problem combining variational methods with the sub-supersolution method, 2013. 


\bibitem{WANG} Yan, B.; Wang, D.; The multiplicity of positive solutions for a class of nonlocal elliptic problem; J. Math. Anal. Appl. \textbf{442}, 72-102, 2016. 

%\bibitem{ZHANG} % Duanzhi, Z., Yijing, S., The role of the power 3 for elliptic equations with negative exponents

%\bibitem{ZHA} %Perera,  K.,  Zhang,  Z., 
%Multiple positive solutions of singular p-Laplacian problems by variational methods, Bound. Value Probl. \textbf{3} (2005),  377-382.




\end{thebibliography}
\end{document}